\newtheorem{theorem}{Theorem}[section]
\theoremstyle{definition}
\newtheorem{definition}[theorem]{Definition}
\newtheorem{example}[theorem]{Example}
\newtheorem{corollary}[theorem]{Corollary}
\newtheorem{lem}[theorem]{Lemma}
\theoremstyle{remark}
\numberwithin{equation}{section}
\begin{document}

\title{ $J$-prime hyperideals and their generalizations  }

\author{M. Anbarloei}
\address{Department of Mathematics, Faculty of Sciences,
Imam Khomeini International University, Qazvin, Iran.
}

\email{m.anbarloei@sci.ikiu.ac.ir }


\subjclass[2010]{ 20N20, 16Y99}


\keywords{ $J$-prime hyperideal, Quasi $J$-prime hyperideal, 2-absorbing $J$-prime hyperideal.}

\begin{abstract}
Let $R$ be a multiplicative hyperring with identity. In this paper,   we define  the concept of $J$-prime hyperideals which is a generalization of $n$-hyperideals and we will show some properties of them. Then, we extend the notion of $J$-prime to quasi $J$-prime and 2-absorbing $J$-prime hyperideals.  
  Various  characterizations
of them are provided. 
\end{abstract}
\maketitle
\section{Introduction}
The hyperstructure theory was introduced by Marty in 1934, at the 8th Congress of Scandinavian Mathematicians
\cite{sorc1}, when he defined the hypergroups and began to investigate their properties with applications to groups,  algebraic functions and rational fractions. Later on,
many researchers have worked on this new field of modern algebra and developed it. 
The multiplicative hyperring, as one important class of hyperrings, was introduced by Rota in 1982 \cite{rota} and was subsequently investigated by many authors \cite{hq1, hq2, hq3, hq4}.  In the hyperring, the multiplication is a hyperoperation, while the addition is an operation. In 1990, the strongly distributive multiplicative hyperrings were characterized by Rota \cite{hq5}. The polynomials over multiplicative hyperrings were studied by Procesi and Rota in \cite{poly1}. Ameri and Kordi introduced the notions of clean multiplicative hyperring and regular multiplicative hyperring, as two generalizations of classical rings, in \cite{amer66} and \cite{amer3}. 
The concept of derivation on multiplicative hyperrings was introduced by Ardekani and Davvaz in \cite{hq21}. Ameri and et. al.  \cite{amer2} have studied notion of hyperring of fractions generated by a multiplicative hyperring. Soltani and et. al. introduced  zero-divisor graphs of a commutative multiplicative hyperring, as a generalization of commutative rings \cite{graph}.
The codes over multiplicative hyperrings were studied by Akbiyik \cite{akbiyik}.

In the theory of rings, the key role of the notion of prime ideal as a generalization of the notion of prime number in the ring $\mathbb {Z}$ is  undeniable. The notion of primeness of hyperideal in a multiplicative hyperring was conceptualized by Procesi and Rota in \cite{hq4}.
The notions of prime and primary hyperideals in multiplicative hyperrings were fully studied by Dasgupta in \cite{ref1}. The concept of $S$-prime hyperideals in multiplicative hyperrings which is a generalization of prime hyperideals was studied in \cite{s-prime}.
Badawi \cite{hq17} introduced and studied a generalization of prime ideals called
2-absorbing ideals and this notion is further generalized by Anderson and Badawi \cite{hq18, hq19}. 
In \cite{hq13}, Ghiasvand introduced the concept of 2-absorbing
hyperideal in a multiplicative hyperring which is a generalisation of prime hyperideals. Several authors have extended and generalized this concept in several ways \cite{mah, hq14, hq15, hq16}. In \cite{ul}, Ulucak  defined the notion of $\delta$-primary
hyperideals in multiplicative hyperrings, which unifies the prime and primary hyperideals under one frame. Recently, we introduce the notions of $n$-hyperideals and $r$-hyperideal in a multiplicative hyperring \cite{n}.

Our aim in this paper is to  introduce and study the concept of $J$-prime hyperideals which is a generalization of $n$-hyperideals. Furthermore, we defined two generalizations of them in a multiplicative hyperring.  The paper is orgnized as follows. In Section 2, we have given some basic definitions and results of multiplicative hyperrings which we need to develop our paper. 
In Section 3, we  introduce the concept of $J$-prime hyperideals and discuss their relations with some other types of hyperideals.   Furthermore, 
we investigate the behaviour of $J$-prime hyperideals under a good homomorphism. In Section 4, we study  a generalization of the  $J$-prime hyperideals which is called  quasi $J$-prime hyperideals .  We give a characterization of local multiplicative hyperrings in terms of  quasi $J$-prime hyperideals. In Section 5, we extend the notion of $J$-prime to 2-absorbing $J$-prime hyperideals and give some properties of them.

\section{Preliminaries}
Recall first the basic terms and definitions from the hyperring theory.  
A hyperoperation on a non-empty set $G$ is a map $"\circ":G \times G \longrightarrow P^*(G)$, where $P^*(G)$ is the set of all the nonempty subsets of $G$. An algebraic system $(G,\circ)$ is called a hypergroupoid. A hypergroupoid $(G,\circ)$ is called a hypergroup if it satisfies the
following:

(1) $a \circ (b \circ c)=(a \circ b ) \circ c$, for all $a,b,c \in G$.

(2) $a \circ G=G \circ a=G$, for all $a \in G$.

A hypergroupoid with the associative hyperoperation is called a semihypergroup.

A non-empty set $R$ with a operation + and a hyperoperation $\circ$ is called a {\it multiplicative hyperring} if it satisfies the
following:

(1)$ (R,+)$ is an abelian group;

(2)$ (R,\circ)$ is a semihypergroup;

(3) for all $a, b, c \in R$, we have $a \circ (b+c) \subseteq a \circ b+a \circ c$ and $(b+c) \circ a \subseteq b \circ a+c \circ a$;

(4) for all $a, b \in R$, we have $a \circ (-b) = (-a) \circ b = -(a \circ b)$.

Let $A$ and $B$ be two nonempty subsets  of $R$ and $r \in R$. Then  we define
\[ A \circ B=\bigcup_{x \in A,\ y \in B}x \circ y, \ \ \ \ A \circ r=A \circ \{r\}\]

A non empty subset $I$ of a multiplicative hyperring $R$ is a {\it hyperideal} if
\begin{itemize}
\item[\rm(i)]~ If $a, b \in I$, then $a - b \in I$;

\item[\rm(iii)]~ If $x \in I $ and $r \in R$, then $r \circ x \subseteq I$.
\end{itemize}

\begin{definition} \cite{davvaz1}
Let $(R_1, +_1, \circ _1)$ and $(R_2, +_2, \circ_2)$ be two multiplicative hyperrings. A mapping from
$R_1$ into $R_2$ is said to be a {\it good homomorphism} if for all $x,y \in R_1$, $\phi(x +_1 y) =\phi(x)+_2 \phi(y)$ and $\phi(x \circ_1 y) = \phi(x)\circ_2 \phi(y)$.
\end{definition}
\begin{definition} \cite{ref1}
A  proper hyperideal $P$ of $R$ is called a {\it prime hyperideal} if $x\circ y \subseteq P$ for $x,y \in R$ implies that $x \in P$ or $y \in P$. The intersection of all prime hyperideals of $R$ containing $I$ is called the prime radical of $I$, being denoted by $\sqrt{I}$. If the multiplicative hyperring $R$ does not have any prime hyperideal containing $I$, we define $\sqrt{I}=R$. 
\end{definition}
\begin{definition} \cite{amer2}
A proper hyperideal $I$ of $R$ is {\it maximal} in $R$ if for
any hyperideal $J$ of $R$ with  $I \subseteq  J \subseteq  R$ then $J = R$. Also, we say that $R$ is a local multiplicative hyperring, if it has just one maximal hyperideal. For a hyperring $R$ we deﬁne the Jacobson radical $J(R)$ of $R$ as the intersection of all maximal hyperideals of $R$. \\Moreover, if $I$ is a proper hyperideal of $R$, then the Jacobson radical $J(I)$ is defined as the intersection of all maximal hyperideals of $R$ containing $I$. 
\end{definition}
Let {\bf C} be the class of all finite products of elements of $R$ i.e. ${\bf C} = \{r_1 \circ r_2 \circ  . . .  \circ r_n \ : \ r_i \in R, n \in \mathbb{N}\} \subseteq P^{\ast }(R)$. A hyperideal $I$ of $R$ is said to be a {\bf C}-hyperideal of $R$ if, for any $A \in {\bf C}, A \cap I \neq \varnothing $ implies  $A \subseteq I$.
Let I be a hyperideal of $R$. Then, $D \subseteq \sqrt{I}$ where $D = \{r \in R: r^n \subseteq I \ for \ some \ n \in \mathbb{N}\}$. The equality holds when $I$ is a {\bf C}-hyperideal of $R$(\cite {ref1}, proposition 3.2). In this paper, we assume that all hyperideals are {\bf C}-hyperideal.
\begin{definition} 
\cite{amer2} Let $R$ be commutative multiplicative hyperring and $1$ be an identity (i. e., for all $a \in R$, $a \in a\circ 1$). An element $x \in R$ in  is called {\it unit}, if there exists $y \in R$, such that $1 \in x\circ y$. Denote the set of all unit elements in $R$ by
$U(R)$.
\end{definition}
\begin{definition} \cite{amer}
An element $x \in R$ is said to be a zero divizor, if there exists
$0 \neq y \in R$ such that $\{0 \}= x \circ y$.  The set of all zero divizor elements in $R$ is denoted by $Z(R)$.
\end{definition}
\begin{definition}
Let $I,J$ be two hyperideals of $R$ and $x \in R$. Then define:
\[(I:a)=\{r \in R : r\circ a \subseteq I \}\]
\[ann(x)=\{y \in R : x \circ y =\{0\}\}\]
\end{definition}
\section{$J$-prime hyperideals }
 \begin{definition} 
Let $I$ be a proper hyperideal of $R$.   We call $I$  a $J$-prime hyperideal  if whenever  $x,y\in R$ with  $x \circ y \subseteq I$ , then either $x \in  J(R)$ or $y \in  I$. 
\end{definition}
\begin{example} \label{exa1}
Let $\mathbb{Z}$ be the ring of integers. For all $a,b \in \mathbb{Z}$, we define the hyperoperation $a \circ b =\{a.x.b \ \vert \ x \in A\}$ where $A=\{2,3\}$. Then $(\mathbb{Z},+,\circ)$ 
is a multiplicative hyperring. In the hyperring, every principal hyperideal generated by prime integer is a $J$-prime hyperideal.
\end{example}
\begin{example} In Example \ref{exa1}, let $A=\{14,21\}$. Then the principal hyperideal $\langle 7 \rangle$ is not a $J$-prime hyperideal. Because, $1 \circ 1=\{14,21\}\subseteq \langle 7 \rangle$, but neither $1 \in \langle 7 \rangle$ nor $1 \in J(\mathbb{Z})$.
\end{example}
\begin{theorem} \label{34}
Let $I$ be a $J$-prime hyperideal of $R$. Then $I$ is in $J(R)$.
\end{theorem}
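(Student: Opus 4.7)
The plan is to show that every element of $I$ lies in $J(R)$ by exploiting the identity $1$ together with the $J$-prime hypothesis. The argument is very short, and the main thing to verify cleanly is that the hyperideal structure and the definition of identity interact in the way we want.

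First I would pick an arbitrary $a \in I$ and observe that, because $I$ is a hyperideal and $1 \in R$, we have $a \circ 1 \subseteq I$ (the hyperideal property absorbs multiplication by any ring element, in particular by $1$). Next I would apply the defining property of a $J$-prime hyperideal to the pair $(x,y) = (a,1)$: since $a \circ 1 \subseteq I$, either $a \in J(R)$ or $1 \in I$.

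The second alternative must be excluded. Here I would use that $I$ is a proper hyperideal together with the identity property $r \in r \circ 1$: if $1$ were in $I$, then for every $r \in R$ we would have $r \in r \circ 1 \subseteq I$ (again invoking the hyperideal absorption), forcing $I = R$, a contradiction. Hence $1 \notin I$, so the first alternative holds, i.e.\ $a \in J(R)$. Since $a$ was arbitrary, $I \subseteq J(R)$.

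There is no real obstacle; the only subtlety worth flagging in the write-up is the justification that $1 \notin I$, which follows from the identity axiom $a \in a\circ 1$ rather than from a direct ring-theoretic equality, since in a multiplicative hyperring $a \circ 1$ is only guaranteed to contain $a$ rather than equal $\{a\}$.
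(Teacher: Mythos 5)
Your proof is correct and follows essentially the same route as the paper: apply the $J$-prime condition to $a \circ 1 \subseteq I$ and rule out $1 \in I$ by properness of $I$. The only (harmless) difference is in how $a \circ 1 \subseteq I$ is justified — you use the hyperideal absorption axiom directly, while the paper invokes the standing \textbf{C}-hyperideal assumption together with $a \in a \circ 1$; both are valid.
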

\begin{proof}
Assume that  $I$ is a $J$-prime hyperideal of $R$ such that it is not in $J(R)$. Let $x \in I$ but $x \notin J(R)$. Since $I$ is a {\bf C}-hyperideal and $x \in x \circ 1$, then $x \circ 1 \subseteq I$. Since $I$ is a $J$-prime hyperideal of $R$ and $x \notin J(R)$, then $1 \in I$ which is a contradiction. Thus, $I$ is in $J(R)$.
\end{proof}
\begin{theorem} \label{31}
For a ring R, the following statements are equivalent:
\begin{itemize}
\item[\rm(i)]~ $R$ is a local hyperring.
\item[\rm(ii)]~Every proper hyperideal of  $R$ is   $J$-prime.
\item[\rm(iii)]~Every proper principal hyperideal of $R$ is $J$-prime.
\end{itemize} 
\end{theorem}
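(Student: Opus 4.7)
The plan is to establish the cycle $(i) \Rightarrow (ii) \Rightarrow (iii) \Rightarrow (i)$. The middle implication is immediate since every principal hyperideal is a hyperideal, so only the other two steps require real work. The driving engine for both is Theorem~\ref{34}, which confines every $J$-prime hyperideal to $J(R)$.

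For $(i) \Rightarrow (ii)$, let $\mathfrak{m}=J(R)$ be the unique maximal hyperideal and take a proper hyperideal $I$ together with elements satisfying $x\circ y \subseteq I$ and $y \notin I$; the goal is to conclude $x \in J(R)$. The key preliminary fact is that every element of $R\setminus \mathfrak{m}$ is a unit, since any non-unit generates a proper hyperideal which, by a standard Zorn argument, lies in some maximal hyperideal, necessarily $\mathfrak{m}$. Assuming $x\notin J(R)$, one chooses $u \in R$ with $1 \in u\circ x$ and multiplies $x\circ y \subseteq I$ on the left by $u$; using that $I$ is a hyperideal and that $\circ$ is associative gives $(u\circ x)\circ y \subseteq I$, so $1\circ y \subseteq I$. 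The identity axiom $y \in y\circ 1$ then forces $y \in I$, a contradiction.

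For $(iii) \Rightarrow (i)$, by Theorem~\ref{34} every proper principal hyperideal lies in $J(R)$. If $x$ is a non-unit then $\langle x\rangle$ is a proper principal hyperideal (else $1 \in u\circ x$ for some $u$, making $x$ a unit), so $x \in \langle x\rangle \subseteq J(R)$. Together with the obvious inclusion $J(R) \subseteq R\setminus U(R)$, this yields $J(R)=R\setminus U(R)$; in particular the set of non-units is a hyperideal containing every proper hyperideal of $R$, so $J(R)$ is the unique maximal hyperideal and $R$ is local. The main technical obstacle is the direction $(i)\Rightarrow(ii)$: one must scrupulously handle the hyperoperation, remembering that the identity provides only the containment $y \in y\circ 1$ (not an equality), and justify that in a local hyperring every non-unit indeed belongs to the unique maximal hyperideal. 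Both points are routine in the classical ring setting but deserve a line of care in the hyperring setup; once secured, the remaining implications follow cleanly from Theorem~\ref{34}.
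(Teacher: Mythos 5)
Your proposal is correct and follows essentially the same route as the paper: $(i)\Rightarrow(ii)$ by inverting the element outside the maximal hyperideal and multiplying $x\circ y\subseteq I$ by that inverse, $(ii)\Rightarrow(iii)$ trivially, and $(iii)\Rightarrow(i)$ by using that a proper principal hyperideal $\langle x\rangle$ being $J$-prime forces $x\in J(R)$ (the paper re-derives this from $x\circ 1\subseteq\langle x\rangle$, while you cite Theorem~\ref{34}, which amounts to the same computation). The only cosmetic difference is that you phrase the last step via non-units rather than elements of an arbitrary maximal hyperideal; both versions conclude that $J(R)$ is the unique maximal hyperideal.
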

\begin{proof}
$(i) \Longrightarrow (ii)$ Suppose that $I$ is a proper hyperideal of $R$ and $M$ is the only maximal hyperideal of $R$. So $J(R)=M$. Let $x \circ y \subseteq I$ for some $x,y \in R$ such that $x \notin M$. Therefore, $x \in U(R)$. Hence, we have $y \in 1 \circ y \subseteq  (x^{-1} \circ x ) \circ y =x^{-1} \circ (x \circ y) \subseteq I$. Thus $I$ is a $J$-prime hyperideal of $R$. 

$(ii) \Longrightarrow (iii)$ Obvious. 

$(iii) \Longrightarrow (i)$ Let every proper principal hyperideal of $R$ is $J$-prime. Suppose that the hyperideal $M$ of $R$ is  maximal. Let $x \in M$.  Since the principal hyperideal $\langle x \rangle$ is $J$-prime and $x \circ 1 \subseteq  \langle x \rangle$, then we get $x \in J(R)$ or $1 \in \langle x \rangle$. In the second case, we have a contradiction. Then  $x \in J(R)$ which means $J(R)=M$. Thus, $R$ is a local hyperring.
\end{proof}
\begin{lem} \label{32}
Each maximal hyperideal $M$ of a multiplicative hyperring $R$ is a prime
hyperideal. 
\end{lem}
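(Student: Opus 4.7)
The plan is to transport the classical ring-theoretic argument to the hyperring setting by means of the colon hyperideal $(M:x)$, which has already been introduced in Definition~2.7. Assume $M$ is a maximal hyperideal and that $x\circ y\subseteq M$ for some $x,y\in R$ with $x\notin M$; the goal is to conclude $y\in M$.

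The first step is to verify that $(M:x)=\{r\in R:r\circ x\subseteq M\}$ is a hyperideal of $R$. Closure under subtraction follows from the subdistributivity axiom: if $r_1,r_2\in(M:x)$, then
\[
(r_1-r_2)\circ x\subseteq r_1\circ x+(-r_2)\circ x\subseteq M+(-M)=M,
\]
using that $M$ is an additive subgroup. Closure under external $\circ$-multiplication reduces, by associativity of $\circ$, to the observation that for $r\in(M:x)$ and $s\in R$, every $t\in s\circ r$ satisfies $t\circ x\subseteq(s\circ r)\circ x=s\circ(r\circ x)\subseteq s\circ M\subseteq M$, so $s\circ r\subseteq(M:x)$. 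This is the step that needs the most care because of the set-valued nature of $\circ$, but it is otherwise routine.

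Next I would observe two inclusions: $M\subseteq(M:x)$ (since $M\circ x\subseteq M$) and $y\in(M:x)$ (by hypothesis). Suppose for contradiction that $y\notin M$. Then $(M:x)$ properly contains $M$, so maximality of $M$ forces $(M:x)=R$. In particular $1\in(M:x)$, meaning $1\circ x\subseteq M$. The identity axiom gives $x\in x\circ 1=1\circ x\subseteq M$, contradicting $x\notin M$. Hence $y\in M$, establishing primeness of $M$.

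The main (mild) obstacle is simply being attentive to inclusions versus equalities when manipulating the hyperoperation in the verification that $(M:x)$ is a hyperideal; once that is in place, the argument is a direct translation of the familiar ring-theoretic proof, and no use of the \textbf{C}-hyperideal hypothesis is required.
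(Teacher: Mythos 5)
Your argument is correct, but it follows a different route from the paper's. The paper assumes $y\notin M$, uses maximality to write $\langle y, M\rangle=R$, extracts an explicit representation $1\in\sum_{i=1}^{n}r_i\circ y+m$ with $m\in M$, and then computes $x\in x\circ 1\subseteq(\sum_{i=1}^{n}x\circ r_i\circ y)+x\circ m\subseteq M$. You instead assume $x\notin M$, verify that the colon set $(M:x)$ from Definition~2.7 is a hyperideal containing $M$ and $y$, and let maximality force $(M:x)=R$, whence $1\circ x\subseteq M$ and $x\in x\circ 1=1\circ x\subseteq M$, a contradiction. The two proofs share the same skeleton (maximality promotes some hyperideal to all of $R$, then $1$ lands in it and the identity axiom $a\in a\circ 1$ finishes), but they differ in which auxiliary hyperideal carries the argument: the paper needs the explicit description of the hyperideal generated by $\{y\}\cup M$ as finite sums $\sum r_i\circ y+m$, whereas your $(M:x)$ is defined intrinsically and its hyperideal verification uses only the sub-distributivity, associativity and sign axioms. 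This makes your version slightly more self-contained; the one point to flag is your silent use of $x\circ 1=1\circ x$, which is harmless here since the paper works throughout with commutative multiplicative hyperrings, and your observation that the \textbf{C}-hyperideal hypothesis is not needed also applies to the paper's own proof.
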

\begin{proof}
Let $M$ be a maximal hyperideal of $R$. Suppose that $x \circ y \subseteq M$ for some $x,y \in R$ but $y \notin M$.  So, $\langle y,M \rangle=R$ which means there exists $m \in M$ such that $1 \in \langle y \rangle+m$. Then there exist $r_i \in R$ such that $1 \in \sum_{i=1}^n r_i \circ y +m$. Hence, we get $x \in x \circ 1 \subseteq x \circ (\sum_{i=1}^n r_i \circ y +m) \subseteq (\sum_{i=1}^n x \circ r_i \circ y)+x \circ m \subseteq M$. Then $x \in M$ as needed.
\end{proof}
Recall from \cite{n} that a proper hyperideal $I$ of $R$ is said to be an $n$-hyperideal if $x \circ y \subseteq I$ implies $x \in \sqrt{0}$ or $y \in I$ for any $x,y \in R$.
\begin{theorem}
If $I$ is an n-hyperideal of $R$, then it is $J$-prime. 
\end{theorem}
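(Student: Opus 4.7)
The plan is to reduce the statement to the set-theoretic containment $\sqrt{0}\subseteq J(R)$ and then apply the definitions directly.

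First I would unpack the hypotheses: suppose $I$ is an $n$-hyperideal and take arbitrary $x,y\in R$ with $x\circ y\subseteq I$. By definition of an $n$-hyperideal, this already forces either $x\in\sqrt{0}$ or $y\in I$. The second alternative is exactly what the $J$-prime condition requires, so the only work is in the first alternative: I must promote ``$x\in\sqrt{0}$'' to ``$x\in J(R)$''.

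The key intermediate step is the containment $\sqrt{0}\subseteq J(R)$. To obtain this, I would use Lemma~\ref{32}: every maximal hyperideal of $R$ is prime. Writing $\sqrt{0}=\bigcap\{P:P\text{ prime hyperideal of }R\}$ (from the definition of the prime radical applied to the hyperideal $\{0\}$, using that all hyperideals in the paper are $\mathbf{C}$-hyperideals) and $J(R)=\bigcap\{M:M\text{ maximal hyperideal of }R\}$, the lemma says the collection indexing the second intersection is a subfamily of the one indexing the first. Intersecting over a smaller family yields a larger set, so $\sqrt{0}\subseteq J(R)$.

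Combining the two observations gives the result: from $x\circ y\subseteq I$ I conclude $x\in\sqrt{0}\subseteq J(R)$ or $y\in I$, which is exactly the $J$-prime condition. I expect no real obstacle here; the proof is essentially a one-line chase once Lemma~\ref{32} is invoked. The only subtlety to watch for is the degenerate case where $R$ has no maximal hyperideals, but in that case $J(R)=R$ by convention (or at any rate contains everything relevant), so the containment $\sqrt{0}\subseteq J(R)$ still holds trivially and the argument goes through unchanged.
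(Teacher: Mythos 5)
Your proposal is correct and follows essentially the same route as the paper: both arguments hinge on the containment $\sqrt{0}\subseteq J(R)$, obtained from Lemma~\ref{32} (every maximal hyperideal is prime, so the intersection defining $J(R)$ runs over a subfamily of the primes whose intersection is $\sqrt{0}$), after which the $n$-hyperideal condition immediately yields the $J$-prime condition. The paper phrases it contrapositively ($x\notin J(R)$ implies $x\notin\sqrt{0}$) while you argue directly, but this is the same proof.
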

\begin{proof}
Let $x \circ y \subseteq I$ for some $x,y \in R$ such that $x \notin J(R)$. By Lemma  \ref{32}, we have $\sqrt{0} \subseteq J(R)$. Therefore, $x \notin \sqrt{0}$. Since $I$ is an n-hyperideal of $R$, then $y \in I$. Thus, $I$ is a $J$-prime hyperideal of $R$.
\end{proof}
\begin{theorem}
Let $R$ be a local multiplicative hyperring such that $\sqrt{0} \subsetneqq J(R)$. Then $J(R)$ is a $J$-prime hyperideal of $R$ which is not an n-hyperideal.
\end{theorem}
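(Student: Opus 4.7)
The plan is to split the statement into two parts: proving $J(R)$ is $J$-prime, and then exhibiting a witness showing it is not an $n$-hyperideal.

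For the first part, I would simply invoke Theorem \ref{31}. Since $R$ is local, $J(R)$ coincides with the unique maximal hyperideal of $R$ and is therefore a proper hyperideal. By the implication $(i) \Longrightarrow (ii)$ of Theorem \ref{31}, every proper hyperideal of $R$ is $J$-prime, so in particular $J(R)$ is $J$-prime. Alternatively, one can give a direct argument: if $x \circ y \subseteq J(R)$ with $x \notin J(R) = M$, then $x$ must be a unit (because in a local hyperring, the complement of the unique maximal hyperideal is exactly $U(R)$), and multiplying by $x^{-1}$ forces $y \in J(R)$.

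For the second part, the hypothesis $\sqrt{0} \subsetneqq J(R)$ guarantees an element $x \in J(R) \setminus \sqrt{0}$. The key computation is then $x \circ 1 \subseteq J(R)$: this follows because $x \in x \circ 1$, so $(x \circ 1) \cap J(R) \neq \varnothing$, and the standing assumption that every hyperideal is a {\bf C}-hyperideal forces $x \circ 1 \subseteq J(R)$ (alternatively, directly from the hyperideal axiom applied to $1 \in R$ and $x \in J(R)$). Were $J(R)$ an $n$-hyperideal, this inclusion would force either $x \in \sqrt{0}$, contradicting the choice of $x$, or $1 \in J(R)$, contradicting the properness of $J(R)$. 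Hence $J(R)$ is not an $n$-hyperideal.

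I do not anticipate a real obstacle; the statement is essentially a packaging of Theorem \ref{31} together with the elementary observation that $J(R)$ strictly containing $\sqrt{0}$ supplies an element violating the $n$-hyperideal definition. The only subtlety worth flagging explicitly is the use of the \textbf{C}-hyperideal assumption to pass from $x \in x \circ 1$ to $x \circ 1 \subseteq J(R)$, exactly as in the proof of Theorem \ref{34}.
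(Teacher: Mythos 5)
Your proposal is correct and follows essentially the same route as the paper: the first part is exactly the appeal to Theorem \ref{31}, and the second part is the same witness $x \in J(R)\setminus\sqrt{0}$ with $x\circ 1 \subseteq J(R)$ violating the $n$-hyperideal condition. Your remark that $x\circ 1 \subseteq J(R)$ already follows from the hyperideal axiom (without needing the \textbf{C}-hyperideal assumption) is a small but accurate refinement of the paper's argument.
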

\begin{proof}
Let $R$ be a local multiplicative hyperring. By Theorem \ref{31}, the hyperideal $J(R)$ of $R$ is $J$-prime.   Let $x \in J(R)-\sqrt{0}$. Then we get  $x \in x \circ 1 \subseteq J(R)$ such that $x \notin \sqrt{0}$ and $1 \notin J(R)$. This implies that $J(R)$ is not an n-hyperideal of $R$.
\end{proof}
Recall from \cite{n} that a proper hyperideal $I$ of $R$ is said to be a $r$-hyperideal if for all  $x,y \in R$, $x \circ y \subseteq I$ and  $ann(x) =\{0\}$, then $y \in I$ .
\begin{theorem}
Let $R$ be a multiplicative hyperring with $Z(R) \subseteq J(R)$. If $I$ is a r-hyperideal of $R$, then $I$ is a $J$-prime hyperideal. 
\end{theorem}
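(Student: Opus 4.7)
The plan is to reduce the $J$-prime condition directly to the $r$-hyperideal condition by exploiting the hypothesis $Z(R)\subseteq J(R)$.

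First, I would unpack the goal: given $x,y\in R$ with $x\circ y\subseteq I$, I need to show that $x\in J(R)$ or $y\in I$. I would argue by contraposition on the first alternative, so assume $x\notin J(R)$ and aim to conclude $y\in I$.

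The key observation is that the hypothesis $Z(R)\subseteq J(R)$ gives, by contraposition, $x\notin J(R)\Longrightarrow x\notin Z(R)$. Since $x$ is not a zero divisor, there is no nonzero $z\in R$ with $x\circ z=\{0\}$; equivalently, $ann(x)=\{0\}$. At this point the $r$-hyperideal hypothesis applies verbatim: from $x\circ y\subseteq I$ and $ann(x)=\{0\}$ it yields $y\in I$, which is exactly what is needed.

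I do not anticipate a real obstacle here. The only subtlety is being careful with the convention that $0\in ann(x)$ always, so ``$ann(x)=\{0\}$'' is the correct formal statement of ``$x$ is not a zero divisor''; aside from verifying this minor point against the definitions given in the preliminaries, the proof is a short chain of implications with no case analysis required.
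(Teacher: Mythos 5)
Your argument is correct and coincides with the paper's own proof: both pass from $x\notin J(R)$ to $x\notin Z(R)$ via the hypothesis $Z(R)\subseteq J(R)$, conclude $ann(x)=\{0\}$, and then apply the $r$-hyperideal condition to obtain $y\in I$. No gaps and no meaningful difference in approach.
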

\begin{proof}
Let $I$ be a r-hyperideal of $R$. Suppose that  $x \circ y \subseteq I$ for some $x,y \in R$ such that $x \notin J(R)$. Since $Z(R) \subseteq J(R)$, then $x \notin Z(R)$ which implies $ann(x)=\{0\}$. Since $I$ is a r-hyperideal of $R$, we get $y \in I$. Thus, $I$ is a $J$-prime hyperideal of $R$.
\end{proof}

\begin{theorem} \label{entersection}
Let $\{I_i\}_{i \in \Delta}$ be a  nonmepty set of $J$-prime hyperideals of $R$. Then $\bigcap_{i \in \Delta}I_i$ is a $J$-prime hyperideal of $R$.
\end{theorem}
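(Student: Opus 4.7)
The plan is a routine dichotomy argument on the element $x$, plus a properness check via Theorem \ref{34}.

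First I would set $I = \bigcap_{i \in \Delta} I_i$ and observe that $I$ is a hyperideal of $R$ (intersections of hyperideals are hyperideals). To verify that $I$ is proper, I would invoke Theorem \ref{34}, which says every $J$-prime hyperideal is contained in $J(R)$. Hence $I \subseteq I_{i_0} \subseteq J(R) \subsetneq R$ for any fixed $i_0 \in \Delta$, so $I$ is proper.

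Next, to check the $J$-prime condition, I would take $x, y \in R$ with $x \circ y \subseteq I$ and split on whether $x \in J(R)$. If $x \in J(R)$, we are immediately done. Otherwise, for each $i \in \Delta$ we have $x \circ y \subseteq I_i$, and since $I_i$ is $J$-prime and $x \notin J(R)$, we must have $y \in I_i$. As this holds for every $i$, it follows that $y \in \bigcap_{i \in \Delta} I_i = I$, which is what we need.

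There is no real obstacle here; the only subtlety is making sure $I$ is actually a proper hyperideal, which is where Theorem \ref{34} is essential (otherwise one would have to rule out the degenerate possibility $I = R$ by hand). The key structural feature that makes the argument go through so cleanly is that in the defining implication ``$x \in J(R)$ or $y \in I$'', the first alternative does not depend on $i$, so it factors out of the intersection uniformly — a phenomenon typical of ``pure'' prime-like conditions.
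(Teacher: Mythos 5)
Your argument is correct and matches the paper's proof essentially verbatim: both proceed by fixing $x \circ y \subseteq \bigcap_{i \in \Delta} I_i$ with $x \notin J(R)$ and pulling the conclusion $y \in I_i$ across every $i$. The only difference is that you also verify properness of the intersection via Theorem \ref{34}, a detail the paper's proof silently omits, so your version is if anything slightly more complete.
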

\begin{proof}
Let $x \circ y \subseteq \bigcap_{i \in \Delta}I_i$ for some $x,y \in R$ such that $x \notin J(R)$. Then $x \circ y \subseteq I_i$ for every $i \in \Delta$. Since $I_i$ is a $J$-prime hyperideal of $R$, we get the result that $y \in I_i$ and so $y \in \bigcap_{i \in \Delta}I_i$.
\end{proof}
\begin{theorem} \label{36}
Let $I$ be a proper hyperideal of $R$.Then the following statements are equivalent:
\begin{itemize}
\item[\rm(i)]~$I$ is a $J$-prime hyperideal of $R$.
\item[\rm(ii)]~$I=(I:x)$ for every $x \notin J(R)$.
\item[\rm(iii)]~$I_1 \circ I_2 \subseteq I$ for some hyperideals $I_1$ and $I_2$ of $R$ implies that $I_1 \subseteq J(R)$ or $I_2 \subseteq I$.
\end{itemize}
\end{theorem}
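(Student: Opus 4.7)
My plan is to prove the three implications in a cycle (i)$\Rightarrow$(ii)$\Rightarrow$(i) and then (i)$\Leftrightarrow$(iii), since (ii) is essentially a reformulation of the defining condition via the colon operation, while (iii) is the standard lift from elements to generated hyperideals.

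For (i)$\Rightarrow$(ii), fix $x \notin J(R)$. The inclusion $I \subseteq (I:x)$ is automatic from $I$ being a hyperideal: for any $a \in I$ we have $a \circ x \subseteq I$. For the reverse, if $r \in (I:x)$ then $r \circ x \subseteq I$, and the $J$-prime hypothesis together with $x \notin J(R)$ forces $r \in I$. The converse (ii)$\Rightarrow$(i) is then immediate: if $x \circ y \subseteq I$ with $x \notin J(R)$, then $y \in (I:x) = I$. For (i)$\Rightarrow$(iii), suppose $I_1 \circ I_2 \subseteq I$ with $I_1 \not\subseteq J(R)$, pick $a \in I_1 \setminus J(R)$, and observe that for every $b \in I_2$ we have $a \circ b \subseteq I_1 \circ I_2 \subseteq I$; the $J$-prime condition applied to $a$ then yields $b \in I$, so $I_2 \subseteq I$.

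The substantive step is (iii)$\Rightarrow$(i). Given $x \circ y \subseteq I$ with $x \notin J(R)$, I would apply (iii) to the principal hyperideals $\langle x \rangle$ and $\langle y \rangle$. The key preliminary is to verify $\langle x \rangle \circ \langle y \rangle \subseteq I$: a typical element of $\langle x \rangle$ lies in a finite sum $\sum_i r_i \circ x$ and similarly for $\langle y \rangle$, so using the weak distributivity $a \circ (b+c) \subseteq a \circ b + a \circ c$ repeatedly, every hyperproduct is contained in a sum of sets of the form $r_i \circ s_j \circ (x \circ y) \subseteq r_i \circ s_j \circ I \subseteq I$. With this inclusion in hand, (iii) gives either $\langle x \rangle \subseteq J(R)$, which contradicts $x \notin J(R)$, or $\langle y \rangle \subseteq I$, which yields $y \in I$ as required. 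The main obstacle I anticipate is precisely this bookkeeping with the weak (inclusion-style) distributivity of $\circ$ over $+$ in the hyperring setting; however, since we only need an inclusion and not an equality, the argument goes through cleanly once the expansion is written out carefully.
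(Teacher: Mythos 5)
Your proposal is correct and follows essentially the same route as the paper: the paper proves the cycle (i)$\Rightarrow$(ii)$\Rightarrow$(iii)$\Rightarrow$(i), and each of your implications reproduces the corresponding argument (passing to $(I:x)$, picking $a\in I_1\setminus J(R)$, and reducing (iii)$\Rightarrow$(i) to $\langle x\rangle\circ\langle y\rangle\subseteq I$). The only cosmetic difference is that the paper cites $\langle x\rangle\circ\langle y\rangle\subseteq\langle x\circ y\rangle$ from Dasgupta's Proposition 2.15 rather than expanding the distributivity by hand as you do.
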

\begin{proof}
$(i) \Longrightarrow (ii)$ Let $I$ be a $J$-prime hyperideal of $R$. It is clear that $I \subseteq (I:x)$ for all $x \in R$. Assume that $y \in (I:x)$ such that $x \notin J(R)$. This means $x \circ y \subseteq I$. Since $I$ is a $J$-prime hyperideal of $R$ and $x \notin J(R)$, then $y \in I$. Thus, we have $I=(I:x)$.

$(ii) \Longrightarrow (iii)$ Let $I_1 \circ I_2 \subseteq I$ for some hyperideals $I_1$ and $I_2$ of $R$ such that $I_1 \nsubseteq J(R)$. Therefore, we get $x \in I_1$ such that $x \notin J(R)$. Hence, $x \circ I_2 \subseteq I$ which means $I_2 \subseteq (I:x)$. Since $I=(I:x)$ for every $x \notin J(R)$, then $I_2 \subseteq I$.

$(iii) \Longrightarrow (i)$ Let $x \circ y \subseteq I$ for some $x,y \in R$ such that $x \notin J(R)$. By  Proposition 2.15 in \cite{ref1}, we  have $\langle x \rangle \circ \langle y \rangle \subseteq \langle x \circ y \rangle \subseteq I$ but $\langle x \rangle \nsubseteq J(R)$. Then we get $\langle y \rangle \subseteq I$ which implies $y \in I$. Thus, $I$ is a $J$-prime hyperideal of $R$.
\end{proof}
\begin{theorem}
Let $I$ be a proper hyperideal of $R$. Then  $I$ is a $J$-prime hyperideal of $R$ if and only if $(I:y) \subseteq J(R)$ for every $y \notin I$.
\end{theorem}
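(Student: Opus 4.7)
The plan is to prove both directions directly from the definition of a $J$-prime hyperideal, using the quotient $(I:y) = \{r \in R : r \circ y \subseteq I\}$ as the natural bridge between the two formulations.

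For the forward direction, I would start with a $J$-prime hyperideal $I$ and an element $y \notin I$, then pick an arbitrary $r \in (I:y)$. By definition of $(I:y)$, we have $r \circ y \subseteq I$. Since $I$ is $J$-prime, the definition forces $r \in J(R)$ or $y \in I$; the hypothesis $y \notin I$ eliminates the second alternative, leaving $r \in J(R)$. This gives $(I:y) \subseteq J(R)$.

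For the reverse direction, I would assume $(I:y) \subseteq J(R)$ for every $y \notin I$ and check the defining condition of a $J$-prime hyperideal. Suppose $x \circ y \subseteq I$ for some $x, y \in R$. If $y \in I$ we are immediately done, so assume $y \notin I$. Then $x \circ y \subseteq I$ says precisely that $x \in (I:y)$, and by hypothesis $(I:y) \subseteq J(R)$, so $x \in J(R)$. Hence the $J$-prime condition is verified.

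There is no real obstacle here; the argument is essentially a reformulation of the definition via the transporter hyperideal $(I:y)$, and it parallels the equivalence $(i) \Longleftrightarrow (ii)$ in Theorem \ref{36}, so no subtler hyperring-theoretic machinery (such as $\mathbf{C}$-hyperideal assumptions or Proposition~2.15 of \cite{ref1}) is needed.
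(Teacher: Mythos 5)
Your proof is correct and follows essentially the same argument as the paper: the forward direction unwinds the definition of $(I:y)$ against the $J$-prime condition with $y \notin I$, and the reverse direction observes that $x \circ y \subseteq I$ with $y \notin I$ places $x$ in $(I:y) \subseteq J(R)$ (the paper phrases this as a contradiction, you as a direct case split, which is immaterial). No further machinery is needed, as you correctly note.
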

\begin{proof}
$\Longrightarrow$ Let $x \in (I:y)$ such that  $y \notin I$. So, $x \circ y \subseteq I$. Since $I$ is a $J$-prime hyperideal of $R$, then $x \in J(R)$.

$\Longleftarrow $ Let $x \circ y \subseteq I$ for some $x,y \in R$ such that $x \notin J(R)$. If $y \notin I$, then $x \in (I:y) \subseteq J(R)$, by the hypothesis. This is a contradiction. Therefore, $y \in I$. Thus, $I$ is a $J$-prime hyperideal of $R$
\end{proof}
\begin{theorem} \label{33}
Let $I$ be a hyperideal of $R$ and let $T$ be  a nonempty subset of $R$ such that $T \nsubseteq I$. If $I$ is a $J$-prime hyperideal of $R$, then $(I:T)$ is a $J$-prime hyperideal of $R$.
\end{theorem}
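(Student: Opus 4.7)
The plan is to unfold the definition $(I:T)=\{r\in R:r\circ T\subseteq I\}=\bigcap_{t\in T}(I:t)$, and then verify the two requirements: that $(I:T)$ is proper, and that it satisfies the $J$-prime absorption condition.

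First I would check properness. If $(I:T)=R$, then in particular $1\in(I:T)$, so $1\circ t\subseteq I$ for every $t\in T$. Using the identity property $t\in t\circ 1$ (together with commutativity) together with the fact that $I$ is a $\mathbf{C}$-hyperideal, we conclude $t\in I$ for every $t\in T$, contradicting $T\nsubseteq I$. Hence $(I:T)$ is a proper hyperideal.

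Next I would verify $J$-primeness directly. Take $x,y\in R$ with $x\circ y\subseteq(I:T)$ and assume $x\notin J(R)$; the goal is $y\in(I:T)$. For each $t\in T$ we have $(x\circ y)\circ t\subseteq I$, hence by associativity $x\circ(y\circ t)\subseteq I$. Now pick any element $z\in y\circ t$; then $x\circ z\subseteq x\circ(y\circ t)\subseteq I$. Since $I$ is $J$-prime and $x\notin J(R)$, the defining property forces $z\in I$. Because this holds for every $z\in y\circ t$, we obtain $y\circ t\subseteq I$, and since $t\in T$ was arbitrary, $y\circ T\subseteq I$, i.e. $y\in(I:T)$. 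This shows $(I:T)$ is $J$-prime.

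There is no real obstacle; the argument is essentially an exercise in unwinding definitions, once one observes that $(I:T)=\bigcap_{t\in T}(I:t)$. The only subtle point is the properness check, which requires using the hypothesis $T\nsubseteq I$ together with $1\in 1\circ 1$-type identity behaviour and the $\mathbf{C}$-hyperideal assumption (recalled in the preliminaries) to pass from $1\circ t\subseteq I$ to $t\in I$. Everything else follows from the associativity of $\circ$ and the $J$-primeness of $I$ applied elementwise inside the hyperproduct $y\circ t$. Note that one could alternatively deduce the result from Theorem~\ref{entersection} via the identity $(I:T)=\bigcap_{t\in T}(I:t)$ combined with the single-element case of Theorem~\ref{33}, but the direct argument above avoids any appeal to the single-element version.
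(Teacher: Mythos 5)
Your proposal is correct and follows essentially the same route as the paper: show $(I:T)$ is proper using $T\nsubseteq I$, then for $x\circ y\subseteq (I:T)$ with $x\notin J(R)$ pass to $x\circ y\circ t\subseteq I$ and apply $J$-primeness of $I$ to conclude $y\circ t\subseteq I$ for every $t\in T$. Your elementwise justification (picking $z\in y\circ t$ and deducing $z\in I$) simply makes explicit a step the paper asserts without detail.
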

\begin{proof}
Let $(I:T)=R$. Then $1 \in (I:T)$ which means $T \subseteq I$. This is a contradiction. Hence,  $(I:T)$ is a proper hyperideal of $R$. Suppose that $x \circ y \subseteq (I:T)$ for some $x,y \in R$ such that $x \notin J(R)$. This implies that $x \circ y \circ t \subseteq I$ for all $t \in T$. Then we get $y \circ t \subseteq I$ for all $t \in T$ as $I$ is a $J$-prime hyperideal of $R$. Thus $y \in (I:T)$.
\end{proof}
\begin{theorem} \label{35}
Suppose that  $I$ is a $J$-prime hyperideal of $R$ such that there is no $J$-prime hyperideal which contains $I$ properly. Then $I$ is a prime hyperideal. 
\end{theorem}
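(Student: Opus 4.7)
The plan is to use the maximality hypothesis (that no $J$-prime hyperideal properly contains $I$) as a lever to sharpen the $J$-prime implication ``$x\in J(R)$ or $y\in I$'' into the prime implication ``$x\in I$ or $y\in I$''. The mechanism for manufacturing new $J$-prime hyperideals from $I$ is Theorem \ref{33}, applied to colon hyperideals of the form $(I:y)$.

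Concretely, I would fix $x,y\in R$ with $x\circ y\subseteq I$ and, aiming at the prime conclusion, assume for contradiction that $y\notin I$. Taking $T=\{y\}$ in Theorem \ref{33}, the condition $T\nsubseteq I$ is satisfied, so $(I:y)$ is a $J$-prime (and in particular proper) hyperideal of $R$. Since $I$ is a hyperideal, every $r\in I$ satisfies $r\circ y\subseteq I$, so $I\subseteq (I:y)$. The maximality hypothesis then forces $(I:y)=I$, because a $J$-prime overideal of $I$ cannot properly contain $I$. But $x\circ y\subseteq I$ is exactly the statement $x\in (I:y)$, so $x\in I$, which is the prime conclusion.

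There is essentially no substantive obstacle here: the whole argument collapses into an invocation of Theorem \ref{33} together with the maximality assumption, and both plug in directly. The only point that one must keep in mind is that $(I:y)$ really is a proper hyperideal, since otherwise the maximality clause would not bite; but this is automatic, because $(I:y)=R$ would give $1\circ y\subseteq I$, hence $y\in I$ via $y\in 1\circ y$, contradicting the assumption $y\notin I$. This properness is in any case already built into Theorem \ref{33}, so no additional verification is needed, and the proof reduces to the three-line chain sketched above.
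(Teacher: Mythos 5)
Your proof is correct and follows essentially the same route as the paper's: both apply Theorem \ref{33} to a colon hyperideal ($(I:y)$ in your case, $(I:x)$ in the paper's, with the roles of the two elements swapped), then use the maximality hypothesis to force the colon hyperideal to equal $I$ and read off the prime conclusion. Your additional remark on the properness of $(I:y)$ is a harmless extra check already subsumed by Theorem \ref{33}.
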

\begin{proof}
Suppose that  $I$ is a $J$-prime hyperideal of $R$ such that there is no $J$-prime hyperideal which contains $I$ properly. Let $x \circ y \subseteq I$ for some $x,y \in R$ such that $x \notin I$. By Theorem \ref{33}, $(I:x)$ is a $J$-prime hyperideal of $R$. Since $I \subseteq (I:x)$, we conclude that $y \in (I:x)=I$, by the hypothesis. Thus, $I$ is a prime hyperideal.
\end{proof}
The next Theorem shows that the inverse of Theorem \ref{35} is true if $I=J(R)$.
\begin{theorem}
Let the hyperideal $J(R)$ of $R$ be prime. Then  $J(R)$ is a $J$-prime hyperideal of $R$ such that there is no $J$-prime hyperideal which contains $J(R)$ properly.
\end{theorem}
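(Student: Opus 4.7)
The plan is to break the statement into its two separate assertions and prove each directly. First I will show that the primeness of $J(R)$ immediately entails $J$-primeness. Take any $x, y \in R$ with $x \circ y \subseteq J(R)$, and assume $x \notin J(R)$. Primeness of $J(R)$ forces $x \in J(R)$ or $y \in J(R)$; since the first alternative is excluded by hypothesis, we must have $y \in J(R)$. This is precisely the conclusion required by the definition of a $J$-prime hyperideal with $I = J(R)$, so the first half is essentially immediate. In fact, the observation underlying this step is that the $J$-prime condition at the hyperideal $J(R)$ is strictly weaker than the prime condition at $J(R)$, so the implication prime $\Longrightarrow$ $J$-prime is automatic at this particular hyperideal.

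For the second half, my plan is to invoke Theorem \ref{34}, which asserts that every $J$-prime hyperideal of $R$ is contained in $J(R)$. Suppose, toward a contradiction, that some $J$-prime hyperideal $I$ of $R$ properly contains $J(R)$, so $J(R) \subsetneq I$. Theorem \ref{34} applied to $I$ yields $I \subseteq J(R)$, which directly contradicts $J(R) \subsetneq I$. Hence no $J$-prime hyperideal of $R$ can strictly contain $J(R)$, completing the proof.

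There is no genuine obstacle here; both halves reduce to a one-line observation once Theorem \ref{34} is in hand. The only mildly delicate point is making sure that the definition of $J$-prime is applied correctly in the first half: namely, that $x \notin J(R)$ (rather than $x \notin I$) is the branch that must fail, since $I = J(R)$ makes these two conditions coincide. I would write the argument as a short two-paragraph proof, with the first paragraph handling the $J$-prime verification and the second paragraph citing Theorem \ref{34} for the maximality claim.
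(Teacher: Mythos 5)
Your proposal is correct and follows essentially the same route as the paper: the first part applies primeness of $J(R)$ directly to the definition of $J$-prime (with the $x \in J(R)$ branch excluded by hypothesis, forcing $y \in J(R) = I$), and the second part cites Theorem \ref{34} to rule out any $J$-prime hyperideal properly containing $J(R)$. No differences worth noting.
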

\begin{proof}
Suppose that  $I=J(R)$. Let $x \circ y \subseteq I$ for some $x,y \in R$ such that $x \notin J(R)$. Since $I$ is a prime hyperideal of $R$, then $y \in I=J(R)$ which means the hyperideal $J(R)$ of $R$ is $J$-prime. By Theorem \ref{34}, we conclude that there is no $J$-prime hyperideal which contains $I$ properly.
\end{proof}
\begin{theorem}
Let $H$ be a hyperideal of $R$ such that $H \nsubseteq J(R)$. Then
\begin{itemize}
\item[\rm(i)]~ If the hyperideals $A_1$ and $A_2$ of $R$ are $J$-prime such that $A_1 \circ H=A_2 \circ H$, then $A_1=A_2$.
\item[\rm(ii)]~ If the hyperideal $I \circ H$ of $R$ is $J$-prime for some hyperideal $I$ of $R$, then $I\circ H=I$.
\end{itemize}
\end{theorem}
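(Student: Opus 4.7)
The plan is to derive both parts from the hyperideal-level characterization of $J$-primeness given in Theorem~\ref{36}(iii): whenever a product $I_1 \circ I_2$ sits inside a $J$-prime hyperideal $I$, either $I_1 \subseteq J(R)$ or $I_2 \subseteq I$. In each part $H$ will appear as one of the two factors, and the standing hypothesis $H \nsubseteq J(R)$ will rule out the undesirable alternative.

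For part~(i), I would begin from the equality $A_1 \circ H = A_2 \circ H$ and observe that this common product lies inside $A_1$, since $A_1$ is a hyperideal and $H \subseteq R$. Applying Theorem~\ref{36}(iii) to the $J$-prime hyperideal $A_1$ with the decomposition $H \circ A_2 \subseteq A_1$, the alternative $H \subseteq J(R)$ is excluded by hypothesis, so $A_2 \subseteq A_1$. Swapping the roles of $A_1$ and $A_2$ and applying the same argument to the $J$-prime hyperideal $A_2$ yields $A_1 \subseteq A_2$, and the two hyperideals coincide.

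For part~(ii), the inclusion $I \circ H \subseteq I$ is automatic from the definition of a hyperideal. For the reverse inclusion, I would apply Theorem~\ref{36}(iii) to the $J$-prime hyperideal $I \circ H$ itself, using the tautological containment $H \circ I \subseteq I \circ H$. Since $H \nsubseteq J(R)$, the only way the conclusion of the theorem can hold is $I \subseteq I \circ H$, which combined with the reverse inclusion gives $I \circ H = I$.

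The only step that requires a bit of care is ensuring that in each invocation of Theorem~\ref{36}(iii) the factor $H$ is placed in the position that the theorem forces into $J(R)$, so that the hypothesis $H \nsubseteq J(R)$ actually excludes the wrong branch; this is handled by the commutativity of $\circ$, which is the running convention of the paper (cf.\ Definition~2.5). No deeper obstacle is anticipated, since the theorem used already packages the $J$-prime condition in exactly the hyperideal-product form we need.
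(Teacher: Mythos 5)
Your proposal is correct and follows essentially the same route as the paper: both parts are reduced to Theorem~\ref{36}(iii) with $H$ placed in the factor position that would have to land in $J(R)$, so that $H \nsubseteq J(R)$ forces the other alternative ($A_2 \subseteq A_1$ and, by symmetry, $A_1 \subseteq A_2$ in part (i); $I \subseteq I \circ H$ from the tautological containment in part (ii)). No substantive difference from the paper's argument.
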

\begin{proof}
 $(i)$ It is clear that $A_1 \circ H=A_2 \circ H \subseteq A_2$. By Theorem \ref{36}, we obtain $A_1 \subseteq A_2$ as the hyperideal $A_2$ is $J$-prime. By a similar argument we get $A_2 \subseteq A_1$. Thus $A_1=A_2$. 
 
 $(ii)$ Since $I$ is a hyperideal of $R$, then $I \circ H \subseteq I$. Let the hyperideal $I \circ H$ of $R$ be $J$-prime. Since $I \circ H \subseteq I \circ H$ and  $H \nsubseteq J(R)$, then $I \subseteq I \circ H$, by Theorem \ref{36}. Thus,  $I \circ H=I$.
\end{proof}
\begin{theorem} \label{37}
Let $(R_1,+_1,\circ_1)$ and $(R_2,+_2,\circ_2)$ be multiplicative hyperrings and $\phi: R_1 \longrightarrow R_2$ be a good homomorphism. If $I_2$ is a $J$-prime hyperideal of $R_2$ such that $Ker \phi \subseteq J(R_1)$, then $\phi^{-1}(I_2)$ is a $J$-prime hyperideal of $R_1$.
\end{theorem}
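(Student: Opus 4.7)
The plan is to verify the $J$-prime condition on $\phi^{-1}(I_2)$ directly, by transporting containments across $\phi$ and invoking the $J$-primeness of $I_2$ in $R_2$. First I would check that $\phi^{-1}(I_2)$ is a proper hyperideal of $R_1$: the preimage of a hyperideal under a good homomorphism inherits the hyperideal structure, and properness follows because otherwise $\phi(1_{R_1}) \in I_2$ would force $I_2 = R_2$, contradicting properness of $I_2$.

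Next, suppose $x \circ_1 y \subseteq \phi^{-1}(I_2)$ for some $x,y \in R_1$ with $x \notin J(R_1)$; the goal is to conclude $y \in \phi^{-1}(I_2)$. Applying $\phi$ and using that it is a good homomorphism yields $\phi(x) \circ_2 \phi(y) = \phi(x \circ_1 y) \subseteq I_2$. Since $I_2$ is $J$-prime in $R_2$, either $\phi(x) \in J(R_2)$ or $\phi(y) \in I_2$; in the latter case $y \in \phi^{-1}(I_2)$ and we are done.

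The main obstacle is ruling out $\phi(x) \in J(R_2)$, and this is precisely where the hypothesis $\ker \phi \subseteq J(R_1)$ is used. The strategy is to establish $\phi^{-1}(J(R_2)) \subseteq J(R_1)$ by a correspondence argument: given any maximal hyperideal $M$ of $R_1$, the hypothesis $\ker\phi \subseteq J(R_1) \subseteq M$ gives $M = \phi^{-1}(\phi(M))$, and by the standard correspondence between hyperideals of $R_1$ containing $\ker\phi$ and hyperideals of the image, $\phi(M)$ corresponds to a maximal hyperideal $\widetilde{M}$ of $R_2$ with $\phi^{-1}(\widetilde{M}) = M$. Then $\phi(x) \in J(R_2) \subseteq \widetilde{M}$ forces $x \in M$ for every maximal $M$, giving $x \in J(R_1)$, contrary to assumption. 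Hence we must be in the case $\phi(y) \in I_2$, so $y \in \phi^{-1}(I_2)$, proving $\phi^{-1}(I_2)$ is $J$-prime.
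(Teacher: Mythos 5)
Your proposal is correct and follows essentially the same route as the paper: apply $\phi$ to get $\phi(x)\circ_2\phi(y)\subseteq I_2$, invoke $J$-primeness of $I_2$, and rule out $\phi(x)\in J(R_2)$ by showing that, since every maximal hyperideal $M$ of $R_1$ contains $\ker\phi\subseteq J(R_1)$, membership of $\phi(x)$ in the corresponding maximal hyperideal of $R_2$ pulls back to $x\in M$, forcing $x\in J(R_1)$. Your version is marginally more careful in that it checks properness of $\phi^{-1}(I_2)$ and makes the correspondence of maximal hyperideals explicit, though note that both your argument and the paper's implicitly treat $\phi(M)$ (equivalently $\widetilde{M}$) as maximal in $R_2$ rather than merely in the image, which strictly speaking requires $\phi$ to be surjective.
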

\begin{proof}
Let $I_2$ is a $J$-prime hyperideal of $R_2$. Suppose that $x \circ_1 y \subseteq \phi^{-1}(I_2)$ for some $x,y \in R_1$ such that $x \notin J(R_1)$. This implies that $\phi(x) \circ_2 \phi(y)=\phi(x \circ_1 y) \subseteq I_2$. Let  $M$ be a maximal hyperideal of $R_1$ and  $\phi(x) \in J(R_2)$. Then $\phi(M)$ is a maximal hyperideal of $R_2$ which implies $\phi(x) \in \phi(M)$. Since $Ker \phi \subseteq M$, then we have $x \in M$ which means $x \in J(R_1)$, a contradiction. Therefore $\phi(x) \notin J(R_2)$. Now, we have $\phi(y) \in I_2$ as $I_2$ is a $J$-prime hyperideal of $R_2$. Then we conclude that $y \in \phi^{-1}(I_2)$. Thus, $\phi^{-1}(I_2)$ is a $J$-prime hyperideal of $R_1$.
\end{proof}
\begin{theorem} \label{38}
Let $(R_1,+_1,\circ_1)$ and $(R_2,+_2,\circ_2)$ be multiplicative hyperrings and $\phi: R_1 \longrightarrow R_2$ be a good homomorphism. If $I_1$ is a $J$-prime hyperideal of $R_1$ such that $Ker \phi \subseteq I_1$, then $\phi(I_1)$ is a $J$-prime hyperideal of $R_2$.
\end{theorem}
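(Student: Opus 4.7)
The plan is to mirror the argument of Theorem \ref{37} in the forward direction, working under the standing assumption that $\phi$ is a surjective good homomorphism (otherwise $\phi(I_1)$ need not even be a hyperideal of $R_2$). First I would check that $\phi(I_1)$ is a proper hyperideal of $R_2$. Since $Ker\phi\subseteq I_1$, the standard identity $I_1=\phi^{-1}(\phi(I_1))$ holds (any $x$ with $\phi(x)=\phi(w)$ for some $w\in I_1$ satisfies $x-w\in Ker\phi\subseteq I_1$), so $\phi(I_1)=R_2$ would force $I_1=\phi^{-1}(R_2)=R_1$, contradicting properness of $I_1$.

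Next, take $a,b\in R_2$ with $a\circ_2 b\subseteq \phi(I_1)$ and pick preimages $x,y\in R_1$ with $\phi(x)=a$ and $\phi(y)=b$. Then
\[ \phi(x\circ_1 y)=\phi(x)\circ_2\phi(y)=a\circ_2 b\subseteq \phi(I_1). \]
For any $z\in x\circ_1 y$ there exists $w\in I_1$ with $\phi(z)=\phi(w)$, hence $z-w\in Ker\phi\subseteq I_1$, which gives $z\in I_1$. Consequently $x\circ_1 y\subseteq I_1$, and the $J$-primeness of $I_1$ yields either $y\in I_1$, in which case $b=\phi(y)\in\phi(I_1)$ and we are done, or else $x\in J(R_1)$.

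The remaining, and main, obstacle is to show that $x\in J(R_1)$ implies $a=\phi(x)\in J(R_2)$. I would argue this via the lattice correspondence already exploited in Theorem \ref{37}: if $M_2$ is any maximal hyperideal of $R_2$, then $\phi^{-1}(M_2)$ is a proper hyperideal of $R_1$ containing $Ker\phi$, and surjectivity together with the correspondence between hyperideals of $R_2$ and hyperideals of $R_1$ containing $Ker\phi$ forces $\phi^{-1}(M_2)$ to be maximal in $R_1$ (any strictly larger hyperideal would, under $\phi$, produce a hyperideal of $R_2$ strictly between $M_2$ and $R_2$). Hence $x\in J(R_1)\subseteq \phi^{-1}(M_2)$, so $\phi(x)\in M_2$; intersecting over all maximal hyperideals of $R_2$ gives $\phi(x)\in J(R_2)$, which completes the verification that $\phi(I_1)$ is $J$-prime.
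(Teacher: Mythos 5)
Your proof is correct and follows essentially the same route as the paper's: pull $a\circ_2 b$ back to $x\circ_1 y\subseteq I_1$ via the hypothesis $Ker\,\phi\subseteq I_1$, apply $J$-primeness in $R_1$, and push forward. The only difference is that the paper simply asserts $\phi(J(R_1))\subseteq J(R_2)$ without justification, whereas you actually prove it (correctly) by showing $\phi^{-1}(M_2)$ is maximal in $R_1$ for each maximal hyperideal $M_2$ of $R_2$; you also make explicit the surjectivity assumption and the properness of $\phi(I_1)$, both of which the paper leaves implicit.
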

\begin{proof}
Let $x_2 \circ_2 y_2 \subseteq \phi(I_1)$ for some $x_2,y_2 \in R_2$ such that $x_2 \notin J(R_2)$. Then for some $x_1,y_1 \in R_1$ we have $\phi(x_1)=x_2$ and $\phi(y_1)=y_2$. So $\phi(x_1) \circ_2 \phi(y_1)=\phi(x_1 \circ_1 y_1) \subseteq \phi(I_1)$. Now, take any $u \in ax_1 \circ y_1$. Then $f(u) \in f(x_1 \circ y_1) \subseteq \phi(I_1)$ and so there exists $w \in I_1$ such that $\phi(u)=\phi(w)$. This means  $\phi(u-w)=0$, that is, $u-w \in Ker \phi \subseteq I_1$ and then $u \in I_1$. Since $I_1$ is a {\bf C}-hyperideal of $R_1$, then we get $x_1 \circ_1 y_1 \subseteq I_1$. Since $\phi(J(R_1)) \subseteq J(R_2)$, then $x_1 \notin J(R_1)$. Hence, we have $y_1 \in I_1$ as $I_1$ is a $J$-prime hyperideal of $R_1$. Thus, $y_2=\phi(y_1) \in \phi(I_1)$. It follows that $\phi(I_1)$ is a $J$-prime hyperideal of $R_2$.
\end{proof}
\begin{corollary} \label{39}
Let $H$ and $I$ be two hyperideals of $R$ such that $H \subseteq I$. If $I$ is a $J$-prime hyperideal of $R$, then the hyperideal $I/H$ of $R/H$ is $J$-prime. 
\end{corollary}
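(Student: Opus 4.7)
The plan is to derive this corollary directly from Theorem \ref{38} by taking $\phi$ to be the canonical projection onto the quotient hyperring. First I would set $R_1 = R$ and $R_2 = R/H$, and define $\pi: R \longrightarrow R/H$ by $\pi(r) = r + H$. Since $(R,+)$ is an abelian group and $H$ is closed under subtraction, $R/H$ inherits the abelian group structure on cosets; the hyperoperation on $R/H$ is induced in the natural way by $(r_1 + H) \circ (r_2 + H) = \{s + H : s \in r_1 \circ r_2\}$, making $(R/H, +, \circ)$ a multiplicative hyperring and $\pi$ a good homomorphism.

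Next I would observe that $\mathrm{Ker}\,\pi = H$, and by the hypothesis $H \subseteq I$, so the kernel condition $\mathrm{Ker}\,\pi \subseteq I_1$ required by Theorem \ref{38} is satisfied. Moreover, $\pi(I) = I/H$ as subsets of $R/H$. Since $I$ is $J$-prime in $R$ by assumption, Theorem \ref{38} applies and yields that $\pi(I) = I/H$ is a $J$-prime hyperideal of $R/H$, which is exactly the desired conclusion.

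The only step that requires any care is checking that $\pi$ is a genuine good homomorphism in the sense of Definition 2.1, i.e., that $\pi(x \circ y) = \pi(x) \circ \pi(y)$ as subsets of $R/H$ (not merely containment). This follows from the way the hyperoperation on $R/H$ is defined above. Given that this is the standard quotient construction used throughout the subject, I do not anticipate any genuine obstacle — the corollary is essentially a direct specialization of Theorem \ref{38} to the projection map.
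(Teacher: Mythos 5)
Your proposal is correct and is essentially identical to the paper's own proof: both define the natural projection $\pi:R\longrightarrow R/H$, note that $\mathrm{Ker}\,\pi=H\subseteq I$, and apply Theorem \ref{38} to conclude that $\pi(I)=I/H$ is $J$-prime. The extra care you take in verifying that $\pi$ is a good homomorphism is a reasonable addition but does not change the argument.
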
 
\begin{proof}
Define the natural epimorphism $\pi :R \longrightarrow R/H$ by $\pi(x)=x+H$. Since $Ker \pi  \subseteq I$, we conclude that $\pi(I)=I/H$ is a $J$-prime hyperideal of $R/H$, by Theorem \ref{38}.
\end{proof}
\begin{corollary} \label{310}
Let $H$ and $I$ be two hyperideals of $R$ such that $H \subseteq I \cap J(R)$. If the hyperideal $I/H$ of $R/H$ is $J$-prime, then $I$ is a $J$-prime hyperideal of $R$.
\end{corollary}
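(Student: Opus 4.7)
The plan is to recognize this as a direct dual to Corollary~\ref{39}: there we pushed a $J$-prime hyperideal forward along the natural epimorphism using Theorem~\ref{38}, and here we want to pull one back, so the natural tool is Theorem~\ref{37}.

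Concretely, I would introduce the natural epimorphism $\pi : R \longrightarrow R/H$ defined by $\pi(x)=x+H$, which is a good homomorphism with $\ker \pi = H$. The hypothesis $H \subseteq J(R)$ then gives exactly the condition $\ker \pi \subseteq J(R)$ required by Theorem~\ref{37}. Applying that theorem to the $J$-prime hyperideal $I/H$ of $R/H$, we conclude that $\pi^{-1}(I/H)$ is a $J$-prime hyperideal of $R$.

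It remains only to observe that $\pi^{-1}(I/H) = I$. The inclusion $I \subseteq \pi^{-1}(I/H)$ is immediate, and conversely if $x \in \pi^{-1}(I/H)$ then $x + H = i + H$ for some $i \in I$, so $x - i \in H \subseteq I$, whence $x \in I$. This identification finishes the argument.

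There is essentially no obstacle here; the only point requiring a moment's care is verifying $\pi^{-1}(I/H)=I$, which uses the hypothesis $H \subseteq I$ (the other inclusion $H \subseteq J(R)$ is what feeds into Theorem~\ref{37}). So both halves of the assumption $H \subseteq I \cap J(R)$ are used, one for each input of the reduction.
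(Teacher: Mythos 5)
Your proposal is correct and follows exactly the paper's own route: the paper likewise invokes the natural epimorphism $\pi:R\longrightarrow R/H$ and applies Theorem~\ref{37}, using $\ker\pi=H\subseteq J(R)$. Your explicit check that $\pi^{-1}(I/H)=I$ (via $H\subseteq I$) is a detail the paper leaves implicit, but it is the same argument.
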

\begin{proof}
Consider the natural epimorphism $\pi$ defined in the proof part of Corollary \ref{39}. Now,  the claim follows by  Theorem \ref{37}.
\end{proof}
\begin{corollary} 
Let $H$ and $I$ be two hyperideals of $R$ with $H \subseteq I$ such that $H$ is $J$-prime.  If the hyperideal $I/H$ of $R/H$ is $J$-prime, then $I$ is a $J$-prime hyperideal of $R$.
\end{corollary}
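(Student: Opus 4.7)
The plan is to observe that this statement follows almost immediately from Theorem \ref{34} combined with Corollary \ref{310}. The key point is that the hypothesis that $H$ itself is $J$-prime already forces $H$ to lie inside the Jacobson radical, which bridges the gap between this corollary and Corollary \ref{310}.

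More concretely, I would proceed as follows. First, invoke Theorem \ref{34} applied to the hyperideal $H$: since $H$ is a $J$-prime hyperideal of $R$, this theorem yields $H \subseteq J(R)$. Next, combine this containment with the standing hypothesis $H \subseteq I$ to deduce that $H \subseteq I \cap J(R)$. At this stage, the hypotheses needed for Corollary \ref{310} have been verified: the pair $H \subseteq I$ satisfies $H \subseteq I \cap J(R)$, and by assumption $I/H$ is a $J$-prime hyperideal of $R/H$. Therefore, a direct application of Corollary \ref{310} to the pair $(H,I)$ yields that $I$ is a $J$-prime hyperideal of $R$, which is the desired conclusion.

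There is essentially no technical obstacle; the entire content of the proof is the recognition that ``$H$ is $J$-prime'' is a strictly stronger hypothesis than ``$H \subseteq J(R)$'' (via Theorem \ref{34}), so this corollary is merely a convenient reformulation of Corollary \ref{310} with a more easily checkable condition on $H$. No calculation with the hyperoperation, the quotient, or the natural epimorphism is needed here, since that work has already been done inside Corollary \ref{310}.
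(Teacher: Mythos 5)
Your proposal is correct and matches the paper's own argument, which simply cites Theorem \ref{34} (to get $H \subseteq J(R)$, hence $H \subseteq I \cap J(R)$) together with Corollary \ref{310}. You have merely spelled out the same two-step reduction in full detail.
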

\begin{proof}
This can be proved by using Corollary \ref{310} and Theorem \ref{34}.
\end{proof} 
\begin{definition}
 A nonempty subset $S$ of $R$ containing $R-J(R)$ is called  a $J$-multiplicatively closed subset  if $x \circ y \subseteq S$ for every  $x \in R-J(R)$ and every $y \in S$.
\end{definition}
\begin{theorem}
Let $I$ be a proper hyperideal of $R$. Then $I$ is $J$-prime if and only if $R-I$ is a $J$-multiplicatively closed subset of $R$. 
\end{theorem}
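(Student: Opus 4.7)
The plan is to prove both implications directly from the definitions, leveraging Theorem \ref{34} to handle the containment condition $R-J(R) \subseteq R-I$, and the standing $\mathbf{C}$-hyperideal assumption to convert nonempty intersection with $I$ into full containment in $I$.

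For the forward direction, assume $I$ is $J$-prime. First I would verify that $R-I$ contains $R-J(R)$: by Theorem \ref{34}, $I \subseteq J(R)$, which gives $R-J(R) \subseteq R-I$. Next, given $x \in R-J(R)$ and $y \in R-I$, I need to show $x \circ y \subseteq R-I$, equivalently $x\circ y \cap I = \varnothing$. Suppose for contradiction that some element of $x \circ y$ lies in $I$. Since $x\circ y \in \mathbf{C}$ and $I$ is a $\mathbf{C}$-hyperideal, this forces $x\circ y \subseteq I$. But $I$ being $J$-prime together with $x \notin J(R)$ then forces $y \in I$, contradicting $y \in R-I$. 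Hence $x \circ y \subseteq R-I$, so $R-I$ is $J$-multiplicatively closed.

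For the backward direction, assume $R-I$ is $J$-multiplicatively closed. Take $x,y \in R$ with $x \circ y \subseteq I$ and $x \notin J(R)$; I must show $y \in I$. Suppose instead $y \in R-I$. Since $x \in R-J(R)$ and $y \in R-I$, the $J$-multiplicatively closed property gives $x \circ y \subseteq R-I$. But $x \circ y$ is a nonempty subset of $R$ and we assumed $x \circ y \subseteq I$, contradicting $x \circ y \subseteq R-I$. Therefore $y \in I$, and $I$ is $J$-prime.

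The step most likely to trip the reader is the forward direction's use of the $\mathbf{C}$-hyperideal hypothesis: without it, the condition $x\circ y \subseteq I$ in the definition of $J$-prime and the condition $x\circ y \cap I = \varnothing$ implicit in $x\circ y \subseteq R-I$ would not match up, and the equivalence would fail. The rest is purely set-theoretic bookkeeping, so I would be careful to flag this $\mathbf{C}$-hyperideal conversion as the key point and otherwise keep the argument short.
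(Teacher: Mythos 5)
Your proof is correct and follows essentially the same route as the paper's: Theorem \ref{34} supplies the containment $R-J(R)\subseteq R-I$, and both directions are the same contrapositive bookkeeping. Your forward direction is in fact slightly more careful than the paper's, which negates ``$x\circ y\subseteq R-I$'' directly to ``$x\circ y\subseteq I$''; you correctly observe that the true negation is $x\circ y\cap I\neq\varnothing$ and invoke the standing $\mathbf{C}$-hyperideal assumption to upgrade this to $x\circ y\subseteq I$, which closes a small gap the paper leaves implicit.
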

\begin{proof}
$\Longrightarrow$ Let $I$ be a proper hyperideal of $R$. By Theorem \ref{34}, we conclude that  $R-J(R) \subseteq R-I$. Suppose that $x \in R-J(R)$ and $y \in R-I$. To establish the claim, suppose, on the contrary, that $x \circ y \subseteq I$. From $x \in R-J(R)$ it follows that $y \in I$ as $I$ is a $J$-prime of $R$. This is a contradiction. Hence $x \circ y  \subseteq R-I$ as needed. 

$\Longleftarrow$ Let $x \circ y \subseteq I$ for some $x,y \in R$ such that $x \notin J(R)$. If $y \notin I$, then we conclude that $x \circ y \subseteq R-I$.  Thus, we arrive at a contradiction.   Therefore $y \in I$ which means $I$ is $J$-prime.
\end{proof}
\begin{theorem}
Let $S$ be a multiplicatively subset of $R$ and $I$ be a hyperideal of $R$ disjoint from $S$. Then there exists a  hyperideal $Q$ which is maximal in the set of all hyperideals  of $R$ disjoint from $S$, containing $I$. Any such
hyperideal $Q$ is a $J$-prime hyperideal of $R$.
\end{theorem}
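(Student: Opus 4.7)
The plan is to adapt the classical argument that a maximal ideal disjoint from a multiplicatively closed set is prime, to the $J$-prime setting here (reading $S$ as $J$-multiplicatively closed, which is what makes the conclusion possible in light of Theorem \ref{34}). For the existence part I would invoke Zorn's Lemma on the family $\mathcal{F}$ of all hyperideals of $R$ that contain $I$ and are disjoint from $S$, ordered by inclusion. The family is nonempty since $I \in \mathcal{F}$, and for any chain in $\mathcal{F}$ the set-theoretic union is again a hyperideal (closure under subtraction and under the hyperoperation action by $R$ reduce to the same property for some member of the chain), still contains $I$, and remains disjoint from $S$. Hence every chain has an upper bound in $\mathcal{F}$, and Zorn's Lemma provides a maximal element $Q$.

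To show any such $Q$ is $J$-prime, assume $x \circ y \subseteq Q$ with $x \notin J(R)$ and suppose for contradiction that $y \notin Q$. Then $\langle Q \cup \{y\}\rangle$ strictly contains $Q$, so by maximality of $Q$ it must meet $S$ at some element $s$. Writing $s$ in the expected form $s \in q + t$ with $q \in Q$ and $t$ a finite combination of terms of type $r_i \circ y$ (and integer multiples of $y$, which can be absorbed using $y \in 1 \circ y$), I would compute $x \circ s$ in two contradictory ways. The one-sided distributivity of the hyperoperation over addition, together with associativity, gives
\[ x \circ s \;\subseteq\; x \circ q + \sum_i r_i \circ (x \circ y) \;\subseteq\; Q + Q \;=\; Q, \]
because $x \circ q \subseteq Q$ and $x \circ y \subseteq Q$. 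On the other hand, $x \in R - J(R)$ and $s \in S$, so the $J$-multiplicatively closed property of $S$ forces $x \circ s \subseteq S$. Thus the nonempty set $x \circ s$ lies in $Q \cap S = \varnothing$, a contradiction. Hence $y \in Q$, and $Q$ is $J$-prime.

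The main obstacle I anticipate is not the overall strategy but the bookkeeping to describe a generic element $s \in \langle Q \cup \{y\}\rangle$ precisely, and to make sure every step of the chain of inclusions for $x \circ s$ uses only the one-sided distributivity available in a multiplicative hyperring rather than the equality valid in a ring. Once that subtlety is navigated, both horns of the contradiction — the containment in $Q$ from the hypothesis $x \circ y \subseteq Q$, and the containment in $S$ from the defining closure property of a $J$-multiplicatively closed subset — fall into place with no further work.
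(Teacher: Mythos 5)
Your proof is correct, and the existence half (Zorn's Lemma applied to the family of hyperideals containing $I$ and disjoint from $S$, with unions of chains as upper bounds) is exactly what the paper does. Where you genuinely diverge is in the second half. You run the classical ``enlarge by $y$'' argument: from $x\circ y\subseteq Q$, $x\notin J(R)$, $y\notin Q$ you pass to $\langle Q\cup\{y\}\rangle$, pick $s$ in its intersection with $S$, decompose $s$ as an element of $q+\sum_i r_i\circ y$ (plus an integer multiple of $y$), and show that $x\circ s$ lies in $Q$ by one-sided distributivity while also lying in $S$ by the $J$-multiplicative closure of $S$. The paper enlarges on the other side: since $y\in (Q:x)\setminus Q$, the colon hyperideal $(Q:x)$ properly contains $Q$ and still contains $I$, so by maximality it meets $S$; for $t\in (Q:x)\cap S$ one has $t\circ x\subseteq Q$ immediately from the definition of the colon and $t\circ x\subseteq S$ from the same closure property, giving the contradiction. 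The colon-ideal route buys precisely what you flag as your main obstacle: it removes all bookkeeping about generic elements of a generated hyperideal and the attendant care with the inclusion-only distributivity, because ``$t\circ x\subseteq Q$'' is the definition of membership in $(Q:x)$. Your route is more elementary and self-contained, but it does oblige you to pin down the form of elements of $\langle Q\cup\{y\}\rangle$; the description you use is consistent with the one the paper itself employs for $\langle y, M\rangle$ in its lemma that maximal hyperideals are prime, so this is a surmountable rather than a fatal issue. Finally, your decision to read $S$ as $J$-multiplicatively closed is the right one: the paper's own proof silently uses $t\circ x\subseteq S$ for $t\in S$ and $x\in R-J(R)$, which is exactly that hypothesis, and the statement would be false for an ordinary multiplicatively closed set.
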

\begin{proof}
Let $\Upsilon$ be the set of all hyperideals of $R$ disjoint from $S$, containing $I$. Then $\Upsilon \neq \varnothing$, since $I \in \Upsilon$. So $\Upsilon$ is a partially ordered set with respect to set inclusion relation. By Zorn’s lemma, there is a hyperideal $Q$ which is maximal in $\Upsilon$. Suppose that $Q$ is not  a $J$-prime hyperideal of $R$. Let $x \circ y \subseteq Q$ such that $x \in R-J(R)$ and $y \in R-Q$.  Therefore we have $Q \subsetneqq (Q:x)$. Since $Q$ is a maximal element of $\Upsilon$, then $(Q:x) \cap S \neq \varnothing$. Let $t \in (Q:x) \cap S$. Then $t \circ x \subseteq Q \cap S$ which means $ Q \cap S \neq \varnothing $. Thus we arrive at a contradiction. Consequently, $Q$ is a $J$-prime hyperideal of $R$.
\end{proof}
Let $(R_1,+_1,\circ_1)$ and $(R_2,+_2,\circ_2)$ be two multiplicative hyperrings with non zero identity. \cite{ul} Recall $(R_1 \times R_2, +,\circ)$ is a multiplicative hyperring with the operation $+$ and the hyperoperation $\circ$ are defined respectively as

$(x_1,x_2)+(y_1,y_2)=(x_1+_1y_1,x_2+_2y_2)$ 
and

$(x_1,x_2) \circ (y_1,y_2)=\{(x,y) \in R_1 \times R_2 \ \vert \ x \in x_1 \circ_1 y_1, y \in x_2 \circ_2 y_2\}$. \\
\begin{theorem}
Let $(R_1,+_1,\circ_1)$ and $(R_2,+_2,\circ_2)$ be two multiplicative hyperrings with non zero identity. Then $R_1 \times R_2$ has no $J$-prime hyperideals.
\end{theorem}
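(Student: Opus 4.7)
The plan is to argue by contradiction: assume that $I$ is a $J$-prime hyperideal of $R = R_1 \times R_2$ and show that $I$ is forced to contain the identity $(1,1)$, contradicting its properness. The key observation is that the orthogonal idempotents $(1,0)$ and $(0,1)$ annihilate one another, so using $r \circ 0 = \{0\}$ we have
\[ (1,0) \circ (0,1) = \{(0,0)\} \subseteq I \quad \text{and} \quad (0,1) \circ (1,0) = \{(0,0)\} \subseteq I. \]
If we can verify that neither $(1,0)$ nor $(0,1)$ lies in $J(R)$, then the $J$-prime property applied to these two inclusions immediately yields $(0,1) \in I$ and $(1,0) \in I$, whence $(1,1) = (1,0) + (0,1) \in I$ and $I = R$, a contradiction.

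The core step is therefore to establish that $(1,0), (0,1) \notin J(R)$. For $(1,0)$, I would observe that $\{0\} \times R_2$ is a proper hyperideal of $R$ (proper because $R_1$ has a nonzero identity, so $R_1 \neq \{0\}$), and extend it to a maximal hyperideal $M$ of $R$ via Zorn's lemma. If $(1,0)$ belonged to $M$, then for every $r \in R_1$, the hyperideal property combined with $r \in r \circ_1 1$ would give $(r,0) \in (r,0) \circ (1,0) \subseteq M$, forcing $R_1 \times \{0\} \subseteq M$. Combined with $\{0\} \times R_2 \subseteq M$, this would yield $M = R$, contradicting the properness of $M$. Hence $(1,0) \notin M$, and in particular $(1,0) \notin J(R)$. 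The symmetric argument, extending $R_1 \times \{0\}$ to a maximal hyperideal instead, shows $(0,1) \notin J(R)$.

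The main obstacle is this Jacobson-radical step. The easier observation that the identity $(1,1)$ is a unit and hence not in $J(R)$ only yields that at least one of $(1,0), (0,1)$ is outside $J(R)$, whereas the $J$-prime argument requires \emph{both}. It is this need for two separate maximal hyperideals — one avoiding $(1,0)$ and one avoiding $(0,1)$ — that makes the Jacobson-radical computation the real substance of the proof; once it is in hand, Theorem \ref{34} (or simply the definition of $J$-prime) closes out the argument in a single line.
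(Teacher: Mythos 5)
Your argument is correct and shares the paper's skeleton: both proofs exploit the orthogonal idempotents $(1,0)$ and $(0,1)$, show they lie outside $J(R_1\times R_2)$, apply $J$-primeness to $(1,0)\circ(0,1)\subseteq I$ in both orders to force both idempotents into $I$, and conclude $(1,1)\in I$, contradicting properness. The genuine divergence is in the Jacobson-radical step, which you rightly identify as the substance of the proof. The paper handles it via units: if $(0,1)\in J(R_1\times R_2)$ then $(1,0)=(1,1)-(0,1)$ would be a unit, which it cannot be since $1\notin 0\circ_1 a$ for any $a$; this is quick but silently imports the classical fact that $1-x$ is a unit whenever $x\in J(R)$, which is never established for multiplicative hyperrings in this paper. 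You instead produce, for each idempotent, an explicit maximal hyperideal avoiding it by extending $\{0\}\times R_2$ (respectively $R_1\times\{0\}$) via Zorn's lemma; this is longer but rests only on the definition of $J(R)$ as the intersection of maximal hyperideals, and it has the added merit of treating an arbitrary $J$-prime hyperideal $I$ rather than one of the product form $I_1\times I_2$ to which the paper's proof tacitly restricts itself. One caveat common to both arguments: the behaviour of products with $0$ is not forced by the axioms of Section 2 (distributivity is only an inclusion), so your blanket use of $r\circ 0=\{0\}$ --- needed both for $(1,0)\circ(0,1)=\{(0,0)\}$ and for $\{0\}\times R_2$ to be a hyperideal --- is an extra hypothesis you should state; the paper needs only the weaker $0\in r\circ 0$ because it then invokes the standing \textbf{C}-hyperideal convention to upgrade $(0,0)\in(1,0)\circ(0,1)\cap I$ to $(1,0)\circ(0,1)\subseteq I$.
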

\begin{proof}
Let $I_1 \times I_2$ is a $J$-prime hyperideal of $R_1 \times R_2$ for some hyperideals $I_1, I_2$  of $R_1$, $R_2$. Since $(0,0) \in (1,0) \circ (0,1) \cap I_1 \times I_2$, then $(1,0) \circ (0,1) \subseteq I_1 \times I_2$. Let $(1,0) , (0,1) \in J(R_1 \times R_2)$. Then $(1,0)=(1,1)-(0,1)$ and $(0,1)=(1,1)-(1,0)$ are unit elements of $R_1 \times R_2$. Therefore we conclude that $(1,0), (0,1)$ are not in $J(R_1 \times R_2)$. Thus, $(1,0),(0,1)$ are in $I_1 \times I_2$. So, $(1,1)=(1,0)+(0,1) \in I_1 \times I_2$ which means $I_1 \times I_2=R_1 \times R_2$.
\end{proof}
\begin{definition}
A proper hyperideal $I$ of $R$ is called $J$-primary if whenever elements $x,y \in R$ and $x \circ y \subseteq I$, then $x \in J(I)$ or $y \in I$.
\end{definition}
\begin{theorem}
Let $I$ be a hyperideal of $R$ with $I \subseteq J(R)$. Then $I$ is a $J$-prime hyperideal if and only if $I$ is $J$-primary.
\end{theorem}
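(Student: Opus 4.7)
The plan is to reduce both notions to the same condition by observing that the hypothesis $I \subseteq J(R)$ forces the equality $J(I) = J(R)$, after which the biconditional is essentially a tautology.

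First, I would unfold the definition of $J(R)$: it is the intersection of all maximal hyperideals of $R$. Consequently, the assumption $I \subseteq J(R)$ means that $I$ is contained in every maximal hyperideal of $R$. In other words, the set of maximal hyperideals that contain $I$ is the same as the set of all maximal hyperideals of $R$.

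Next, I would apply the definition of $J(I)$, which is the intersection of all maximal hyperideals containing $I$. Combined with the previous step, this yields
\[
J(I) \;=\; \bigcap\{M \text{ maximal in } R : I \subseteq M\} \;=\; \bigcap\{M \text{ maximal in } R\} \;=\; J(R).
\]
With this identification in hand, the defining condition for $I$ being $J$-prime, namely ``$x\circ y \subseteq I$ implies $x \in J(R)$ or $y \in I$,'' becomes word-for-word the defining condition for $I$ being $J$-primary, namely ``$x\circ y \subseteq I$ implies $x \in J(I)$ or $y \in I$.'' Hence each direction of the biconditional is immediate.

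There is no real obstacle here: the content of the statement is exactly the observation $J(I)=J(R)$ under the hypothesis $I\subseteq J(R)$, so I do not anticipate any subtle case analysis. The only thing worth flagging in the write-up is a brief sentence confirming that $I$ is proper (so that $J(I)$ and the $J$-primary definition make sense), which follows from $I \subseteq J(R) \subsetneq R$ whenever $R$ admits at least one maximal hyperideal.
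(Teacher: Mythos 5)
Your proof is correct and takes essentially the same route as the paper's: both reduce the equivalence to comparing $J(I)$ with $J(R)$, the paper using the inclusion $J(R)\subseteq J(I)$ in one direction and $J(I)\subseteq J(R)$ (from the hypothesis $I\subseteq J(R)$) in the other. You simply package these as the single equality $J(I)=J(R)$ up front, which makes the two definitions coincide verbatim.
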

\begin{proof}
$\Longrightarrow$ Let $I$ be a $J$-prime hyperideal of $R$. Suppose that $x \circ y \subseteq I$ for some $x,y \in R$ such that $x \notin J(I)$. This means $x  \notin J(R)$ as $J(R) \subseteq J(I)$. Since $I$ is a $J$-prime hyperideal of $R$ and $x  \notin J(R)$, we get $y \in I$. Consequently, the hyperideal $I$ is $J$-primary.

$\Longleftarrow$ Suppose that the hyperideal $I$ of $R$ is $J$-primary. Let $x \circ y \subseteq I$ for some $x,y \in R$ such that $x \notin J(R)$. By the hypothesis, we have $J(I) \subseteq J(R)$. This implies that $x \notin J(I)$. Since  $I$  is a $J$-primary  hyperideal of $R$ and $x \notin J(I)$, then we get $y \in I$. Thus, $I$ is a $J$-prime hyperideal.
\end{proof}
\section{quasi $J$-prime hyperideals}
In this section, we define the concept of quasi $J$-prime hyperideals as a generalization of $J$-prime hyperideals.
\begin{definition}
A proper hyperideal $I$ of $R$ is called a quasi $J$-prime hyperideal if $\sqrt{I}$ is $J$-prime.
\end{definition}
\begin{example} In Example \ref{exa1}, the  hyperideals $\langle 2 \rangle$ and $\langle 3 \rangle$ are quasi $J$-prime. The hyperideal $\langle 12 \rangle$ is not a quasi $J$-prime hyperideal. In fact, $3 \circ 4 =\{24,36\} \subseteq \langle 12 \rangle$, but $3,4 \notin \sqrt{\langle 12 \rangle}=\langle 2 \rangle \cap \langle 3 \rangle$ and $3,4 \notin J(\mathbb{Z})$.
\end{example}
\begin{theorem} \label{41}
Let $I$ be a quasi $J$-prime  hyperideal of $R$. If $x \circ H \subseteq I$ for some hyperideal $H$ of $R$ and some element $x \in R$, then $x \in J(R)$ or $H \subseteq \sqrt{I}$.
\end{theorem}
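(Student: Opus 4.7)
The plan is to reduce the claim directly to the elementwise definition of a $J$-prime hyperideal applied to $\sqrt{I}$. Assume $x \notin J(R)$; the task then becomes showing $H \subseteq \sqrt{I}$. Fix an arbitrary element $h \in H$. Since $H$ is a hyperideal and $x \circ H \subseteq I$, we have $x \circ h \subseteq x \circ H \subseteq I$, and since every hyperideal is contained in its prime radical, $x \circ h \subseteq \sqrt{I}$.

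Now invoke the hypothesis that $I$ is quasi $J$-prime, which means by definition that $\sqrt{I}$ is a $J$-prime hyperideal. Applying the defining property of $J$-primeness to $x \circ h \subseteq \sqrt{I}$ together with $x \notin J(R)$ forces $h \in \sqrt{I}$. As $h \in H$ was arbitrary, we conclude $H \subseteq \sqrt{I}$, completing the proof.

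Alternatively, one could bypass the pointwise step by appealing directly to Theorem \ref{36}(iii) for the $J$-prime hyperideal $\sqrt{I}$: from $\langle x \rangle \circ H \subseteq \langle x \circ H \rangle \subseteq \sqrt{I}$ one obtains at once $\langle x \rangle \subseteq J(R)$ or $H \subseteq \sqrt{I}$, and the first alternative gives $x \in J(R)$. There is no real obstacle here: the only thing to be careful about is the transition from the subset containment $x \circ H \subseteq I$ to the elementwise containment $x \circ h \subseteq \sqrt{I}$, which is immediate once one recalls $I \subseteq \sqrt{I}$.
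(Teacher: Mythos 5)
Your proof is correct and coincides in substance with the paper's: the paper simply packages your elementwise argument in colon-ideal notation, applying Theorem \ref{36} to get $\sqrt{I}=(\sqrt{I}:x)$ for $x\notin J(R)$ and then noting $H\subseteq (I:x)\subseteq(\sqrt{I}:x)=\sqrt{I}$. Nothing further is needed.
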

\begin{proof}
Let $I$ be a quasi $J$-prime  hyperideal of $R$. Suppose that $x \circ H \subseteq I$ for some hyperideal $H$ of $R$ and some element $a \in R$ such that $x \notin J(R)$. By Theorem \ref{36},  we get $\sqrt{I}=(\sqrt{I}:x)$. Since $(I:x) \subseteq (\sqrt{I}:x)$, then $H \subseteq \sqrt{I}$. 
\end{proof}
\begin{theorem} \label{42}
Let $I$ be a proper hyperideal of $R$. Then the following are equivalent:
\begin{itemize}
\item[\rm(i)]~ $I$ is a quasi $J$-prime  hyperideal of $R$.
\item[\rm(ii)]~ If $H \circ T \subseteq I$ for some hyperideals $H$ and $T$ of $R$, then $H \subseteq J(R)$ or $T \subseteq \sqrt{I}$.
\item[\rm(iii)]~ If $x \circ y \subseteq I$ for some $x,y \in R$, then $x \in J(R)$ or $y \in \sqrt{I}$.
\end{itemize}
\end{theorem}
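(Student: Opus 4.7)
The plan is to establish the cycle $(i) \Longrightarrow (ii) \Longrightarrow (iii) \Longrightarrow (i)$, with the last implication being the substantive one.

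For $(i) \Longrightarrow (ii)$, I would use that $I \subseteq \sqrt{I}$: given $H \circ T \subseteq I$ with $H \not\subseteq J(R)$, we also have $H \circ T \subseteq \sqrt{I}$, and since $\sqrt{I}$ is $J$-prime by hypothesis, Theorem \ref{36}(iii) applied to $\sqrt{I}$ immediately yields $T \subseteq \sqrt{I}$.

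For $(ii) \Longrightarrow (iii)$, the idea is to pass to principal hyperideals. Given $x \circ y \subseteq I$ for $x, y \in R$, Proposition 2.15 of \cite{ref1} gives $\langle x \rangle \circ \langle y \rangle \subseteq \langle x \circ y \rangle \subseteq I$, and then $(ii)$ forces $\langle x \rangle \subseteq J(R)$ or $\langle y \rangle \subseteq \sqrt{I}$, i.e., $x \in J(R)$ or $y \in \sqrt{I}$.

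The substantive direction is $(iii) \Longrightarrow (i)$. Assume $(iii)$ and suppose $a \circ b \subseteq \sqrt{I}$ with $a \notin J(R)$; the goal is $b \in \sqrt{I}$. Since $I$ is a ${\bf C}$-hyperideal, each element of the finite set $a \circ b = \{z_1, \ldots, z_k\}$ satisfies $z_i^{n_i} \subseteq I$ for some $n_i$. Setting $N = n_1 + \cdots + n_k - (k-1)$, a pigeonhole argument on exponents, combined with the commutativity and associativity of $\circ$, shows that every $N$-fold product of the $z_i$'s factors as $z_1^{m_1} \circ \cdots \circ z_k^{m_k}$ with some $m_i \geq n_i$ and therefore lies in $I$. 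Hence $(a \circ b)^N \subseteq I$, and using $(a \circ b)^N = a^N \circ b^N$ gives $a^N \circ b^N \subseteq I$. Applying $(iii)$ elementwise to $u \circ v \subseteq I$ for each $u \in a^N$, $v \in b^N$: in one case every $u \in a^N$ lies in $J(R)$, so that $a^N \subseteq J(R)$, and then Lemma \ref{32} together with an induction on the factorization $a \circ a^{N-1} \subseteq M$ through each maximal hyperideal $M$ forces $a \in J(R)$, contradicting our hypothesis; in the remaining case some $u_0 \in a^N$ avoids $J(R)$, so $(iii)$ forces $b^N \subseteq \sqrt{I}$, and the same prime-iteration argument applied to $b$ yields $b \in \sqrt{I}$.

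The main obstacle will be the step from the pointwise radical condition $a \circ b \subseteq \sqrt{I}$ to the uniform inclusion $a^N \circ b^N \subseteq I$: it requires combining the ${\bf C}$-hyperideal characterization of $\sqrt{I}$, the identity $(a \circ b)^N = a^N \circ b^N$ from commutativity and associativity, a pigeonhole bound on exponents, and a careful elementwise reapplication of $(iii)$ to a product of sets rather than to single elements.
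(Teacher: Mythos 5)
Your implications $(i)\Rightarrow(ii)$ and $(ii)\Rightarrow(iii)$ are correct and essentially identical to the paper's (the paper routes $(i)\Rightarrow(ii)$ through Theorem \ref{41}, which is itself just Theorem \ref{36} applied to $\sqrt{I}$, exactly as you do). The problem is in $(iii)\Rightarrow(i)$, at the step you yourself flag as the main obstacle. You write ``each element of the finite set $a\circ b=\{z_1,\ldots,z_k\}$'' and then run a pigeonhole argument on the exponents $n_1,\ldots,n_k$. But a hyperoperation takes values in $P^{\ast}(R)$, the set of \emph{all} nonempty subsets of $R$; nothing in the definition of a multiplicative hyperring makes $a\circ b$ finite, and the members of the class $\mathbf{C}$ are ``finite products'' only in the sense of products of finitely many ring elements, not finite sets. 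When $a\circ b$ is infinite there is no bound $N=n_1+\cdots+n_k-(k-1)$, and your derivation of $(a\circ b)^N\subseteq I$ collapses.

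The inclusion you want follows much more cheaply from the standing hypothesis that $I$ is a $\mathbf{C}$-hyperideal, and this is what the paper's one-line assertion ``$(x\circ y)^n=x^n\circ y^n\subseteq I$ for some $n$'' is implicitly relying on: pick a \emph{single} $z\in a\circ b\subseteq\sqrt{I}$; then $z^n\subseteq I$ for some $n$ (Proposition 3.2 of \cite{ref1}, valid because $I$ is a $\mathbf{C}$-hyperideal), and $z^n\subseteq(a\circ b)^n=a^n\circ b^n$, which is an element of $\mathbf{C}$. Hence $a^n\circ b^n\cap I\neq\varnothing$ forces $a^n\circ b^n\subseteq I$, with no finiteness and no pigeonhole needed. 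From there your case analysis is fine and matches the paper: $a^n\nsubseteq J(R)$ because maximal hyperideals are prime (Lemma \ref{32}), so some $u_0\in a^n$ lies outside $J(R)$, and applying $(iii)$ to $u_0\circ v\subseteq I$ for each $v\in b^n$ gives $b^n\subseteq\sqrt{I}$, whence $b\in\sqrt{\sqrt{I}}=\sqrt{I}$. So the architecture of your proof is right; replace the pigeonhole step with the $\mathbf{C}$-hyperideal argument.
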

\begin{proof}
$(i) \Longrightarrow (ii)$ Let $H \circ T \subseteq I$ for some hyperideals $H$ and $T$ of $R$ such that $H \nsubseteq J(R)$. Take $x \in H$ such that  $x \notin J(R)$. Clearly, $x \circ T \subseteq I$. Since $I$ is a quasi $J$-prime  hyperideal of $R$ and $x \notin J(R)$, then $T \subseteq \sqrt{I}$, by Theorem \ref{41}. 

$(ii) \Longrightarrow (iii)$ Let $x \circ y \subseteq I$ for some $x,y \in R$. Put $H=\langle x \rangle$ and $T=\langle y \rangle$. Hence, $\langle x \rangle \circ \langle y \rangle \subseteq \langle x \circ y \rangle \subseteq I$. By the assumption, we get $x \in \langle x \rangle \subseteq J(R)$ or $y \in \langle y \rangle \subseteq \sqrt{I}$.

$(iii) \Longrightarrow (i)$ Let $x \circ y \subseteq \sqrt{I}$ for some $x,y \in R$ such that $x \notin J(R)$. This means we have $(x \circ y)^n =x^n \circ y ^n \subseteq I$ for some $n \in \mathbb{N}$. Since $x \notin J(R)$, then $x^n \notin J(R)$, by Theorem \ref{32}. Now, by the assumption, we obtain $y^n \subseteq \sqrt{I}$ which implies $y \in \sqrt{\sqrt{I}}$. Thus $y \in \sqrt{I}$ as needed.
\end{proof}
\begin{theorem} \label{43}
Let $I$ be a proper hyperideal of $R$. Then $I$ is a quasi $J$-prime hyperideal of $R$ if and only if $I \subseteq J(R)$ and for $x,y \in R$, $x \circ y \subseteq I$ implies that $x \in J(I)$ or $y \in \sqrt{I}$.
\end{theorem}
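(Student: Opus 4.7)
The plan is to prove both directions by exploiting two observations: first, if $I\subseteq J(R)$ then $J(I)=J(R)$, because every maximal hyperideal contains $J(R)\supseteq I$, so the set of maximal hyperideals containing $I$ coincides with the set of all maximal hyperideals; second, $J(R)$ behaves like a radical in the sense that $x^n\in J(R)$ forces $x\in J(R)$, since by Lemma \ref{32} each maximal hyperideal is prime and $J(R)$ is their intersection.

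For the forward direction, I would assume $I$ is quasi $J$-prime, so $\sqrt{I}$ is $J$-prime. Applying Theorem \ref{34} to $\sqrt{I}$ gives $\sqrt{I}\subseteq J(R)$, hence $I\subseteq J(R)$; by the first observation above, $J(I)=J(R)$. Now if $x\circ y\subseteq I$, then $x\circ y\subseteq\sqrt{I}$, and by the $J$-primeness of $\sqrt{I}$ either $x\in J(R)=J(I)$ or $y\in\sqrt{I}$, which is exactly the claimed implication.

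For the reverse direction, I would assume $I\subseteq J(R)$ together with the stated implication, and aim to verify the third condition of Theorem \ref{42} for $\sqrt{I}$. Take $x,y\in R$ with $x\circ y\subseteq\sqrt{I}$. Since we assume all hyperideals are $\mathbf{C}$-hyperideals, there is $n\in\mathbb{N}$ with $x^{n}\circ y^{n}=(x\circ y)^{n}\subseteq I$. Applying the hypothesis gives $x^{n}\in J(I)$ or $y^{n}\in\sqrt{I}$. In the second case, $y\in\sqrt{\sqrt{I}}=\sqrt{I}$. In the first case, the identity $J(I)=J(R)$ (from $I\subseteq J(R)$) yields $x^{n}\in J(R)$, and the second observation above promotes this to $x\in J(R)$. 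Hence by Theorem \ref{42}, $\sqrt{I}$ is $J$-prime, i.e., $I$ is quasi $J$-prime.

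The only subtle step is the equality $J(I)=J(R)$ under $I\subseteq J(R)$; once this is in place, both directions reduce to routine applications of Theorems \ref{34}, \ref{42}, and Lemma \ref{32}. I do not anticipate any genuine obstacle beyond making sure the radical argument for passing from $x^n\in J(R)$ to $x\in J(R)$ is correctly attributed to the primeness of maximal hyperideals.
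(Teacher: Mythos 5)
Your proof is correct and follows essentially the same route as the paper: Theorem \ref{34} applied to $\sqrt{I}$ gives the forward direction, and the converse rests on the observation that $I\subseteq J(R)$ forces $J(I)=J(R)$ together with the characterization in Theorem \ref{42}. The only difference is that in the reverse direction you re-derive the reduction from $x\circ y\subseteq\sqrt{I}$ to $x^{n}\circ y^{n}\subseteq I$ (and the step that $x^{n}\subseteq J(R)$ implies $x\in J(R)$), which is already packaged inside the implication (iii)$\Rightarrow$(i) of Theorem \ref{42}; the paper instead applies condition (iii) directly to $x\circ y\subseteq I$, which makes the converse a one-line verification.
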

\begin{proof}
$\Longrightarrow $ Let $\sqrt{I}$ be a  $J$-prime hyperideal of $R$. By Theorem \ref{34}, we have $ \sqrt{I} \subseteq J(R)$. Since $I \subseteq \sqrt{I}$, then we get $I \subseteq J(R)$. Let $x \circ y \subseteq I$ for some $x,y \in R$. Then we have $x \circ y \subseteq \sqrt{I}$. Since $\sqrt{I} $ is a $J$-prime hyperideal of $R$, then we obtain $x \in J(R)\subseteq J(I)$ or $y \in \sqrt{I}$. 

$\Longleftarrow$ Let $x \circ y \subseteq I$ such that $x \notin J(R)$. By the hypothesis, we get $J(I) \subseteq J(R)$ which implies $x \notin J(I)$. Hence, we have $y \in \sqrt{I}$. We conclude that the hyperideal $I$ of $R$ is a quasi $J$-prime hyperideal. 
\end{proof}
\begin{lem} \label{44}
Let $I$ be a hyperideal of $R$ and $S$ be a subset of $R$ with $S \nsubseteq J(R)$. If $I$ is a quasi $J$-prime hyperideal of $R$, then $(\sqrt{I}:S) \subseteq \sqrt{(I:S)}$.
\end{lem}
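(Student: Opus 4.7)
The plan is to show that an arbitrary element $x$ of $(\sqrt{I}:S)$ lies in $\sqrt{(I:S)}$ by first pushing $x$ into $\sqrt{I}$ using the $J$-prime property of $\sqrt{I}$, and then upgrading this to $x \in \sqrt{(I:S)}$ via the hyperideal absorption property.

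First I would fix $x \in (\sqrt{I}:S)$, so that $x \circ S \subseteq \sqrt{I}$. Since by hypothesis $S \nsubseteq J(R)$, we may choose some $s_0 \in S$ with $s_0 \notin J(R)$. In particular $x \circ s_0 \subseteq \sqrt{I}$. Now because $I$ is quasi $J$-prime, the radical $\sqrt{I}$ is itself $J$-prime, so the defining implication of a $J$-prime hyperideal (applied using commutativity of $\circ$, so that $s_0 \circ x \subseteq \sqrt{I}$) forces $s_0 \in J(R)$ or $x \in \sqrt{I}$. The first alternative is ruled out by the choice of $s_0$, leaving $x \in \sqrt{I}$.

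Next I would translate $x \in \sqrt{I}$ into membership in $\sqrt{(I:S)}$. Since we are working under the blanket hypothesis that all hyperideals are \textbf{C}-hyperideals, $x \in \sqrt{I}$ yields $x^n \subseteq I$ for some $n \in \mathbb{N}$. Because $I$ is a hyperideal, for every $a \in x^n$ and every $s \in S$ we have $a \circ s \subseteq I$, so $x^n \circ S \subseteq I$, i.e.\ $x^n \subseteq (I:S)$. This means precisely that $x \in \sqrt{(I:S)}$, completing the inclusion.

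I do not expect any serious obstacle here; the only mildly delicate points are (i) invoking commutativity when applying the $J$-prime property of $\sqrt{I}$ to $x \circ s_0$ in order to conclude about $x$ rather than about $s_0$, and (ii) silently using the \textbf{C}-hyperideal assumption so that $x \in \sqrt{I}$ actually produces a power of $x$ contained in $I$ (not merely contained in some hyperideal bigger than $I$). Both are standard in this paper and available from Section~2.
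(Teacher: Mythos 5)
Your proof is correct and follows essentially the same route as the paper's: pick $s_0\in S\setminus J(R)$, use $x\circ s_0\subseteq\sqrt{I}$ and the $J$-primeness of $\sqrt{I}$ to get $x\in\sqrt{I}$, then pass to $x^n\subseteq I\subseteq(I:S)$ via the \textbf{C}-hyperideal assumption. Your explicit remarks about commutativity and about needing the \textbf{C}-hyperideal hypothesis to extract $x^n\subseteq I$ are points the paper leaves implicit, but the argument is the same.
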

\begin{proof}
Let $x \in (\sqrt{I}:S)$. Then $x \circ S=\sqrt{I}$. Since $x \circ S=\bigcup_{y \in S}x \circ  y $,  we have $x \circ y \subseteq \sqrt{I}$ for every $y \in S$. Since $S \nsubseteq J(R)$, then there exists $z \in S-J(R)$ such that $x \circ z \subseteq \sqrt{I}$. Since $\sqrt{I}$ is a $J$-prime hyperideal of $R$, we get $x \in \sqrt{I}$. Therefore for some $n \in \mathbb{N}$, $x^n \subseteq I$. This implies that $x^n \subseteq (I:S)$ which means $x \in \sqrt{(I:S)}$. Consequently, $(\sqrt{I}:S) \subseteq \sqrt{(I:S)}$.
\end{proof}
\begin{theorem} \label{45}
Let $I$ be a hyperideal of $R$ and $S$ be a subset of $R$ such that  $S \nsubseteq J(R)$. If the hyperideal $I$ of $R$ is quasi $J$-prime, then so is $(I:S)$.  
\end{theorem}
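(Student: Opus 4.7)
The plan is to verify the element-wise criterion for quasi $J$-primeness given by Theorem \ref{42}(iii), combined with the containment established in Lemma \ref{44}. Concretely, I want to check that $(I:S)$ is proper and that $x\circ y \subseteq (I:S)$ with $x\notin J(R)$ forces $y\in\sqrt{(I:S)}$.

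First I would dispense with properness. If $(I:S)=R$, then $1\in (I:S)$, so $S\subseteq 1\circ S\subseteq I\subseteq \sqrt{I}$. But $\sqrt{I}$ is $J$-prime by hypothesis, so Theorem \ref{34} yields $\sqrt{I}\subseteq J(R)$, forcing $S\subseteq J(R)$ and contradicting the standing assumption $S\nsubseteq J(R)$. Hence $(I:S)$ is a proper hyperideal of $R$.

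Now suppose $x\circ y\subseteq (I:S)$ for some $x,y\in R$ with $x\notin J(R)$. Then $x\circ y\circ s\subseteq I\subseteq\sqrt{I}$ for every $s\in S$. Writing $x\circ y\circ s=\bigcup_{t\in y\circ s} x\circ t$, I get $x\circ t\subseteq\sqrt{I}$ for each $t\in y\circ s$. Because $\sqrt{I}$ is a $J$-prime hyperideal and $x\notin J(R)$, the defining property of $J$-primeness forces $t\in\sqrt{I}$ for every such $t$, so $y\circ s\subseteq\sqrt{I}$. Since $s\in S$ was arbitrary, $y\circ S\subseteq\sqrt{I}$, i.e.\ $y\in(\sqrt{I}:S)$. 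Applying Lemma \ref{44} (whose hypothesis $S\nsubseteq J(R)$ is exactly what we have), we conclude $y\in\sqrt{(I:S)}$. By the equivalence (i)$\Longleftrightarrow$(iii) of Theorem \ref{42}, $(I:S)$ is quasi $J$-prime.

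The only delicate point is the transition from the hyperproduct condition $x\circ y\circ s\subseteq \sqrt{I}$ to applying the element-wise $J$-prime definition, since that definition speaks of ordinary products $a\circ b$ rather than triple hyperproducts; unpacking $y\circ s$ element by element and invoking $J$-primeness of $\sqrt{I}$ uniformly in $t\in y\circ s$ is what makes the argument go through, and this is where the {\bf C}-hyperideal convention together with the $J$-prime property are both quietly used. Everything else is a direct assembly of Lemma \ref{44} and Theorem \ref{42}.
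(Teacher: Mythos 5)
Your proof is correct and follows essentially the same route as the paper's: decompose $x\circ y\circ S$ as a union of sets $x\circ t$ with $t\in y\circ S$, use $J$-primeness of $\sqrt{I}$ to place each $t$ in $\sqrt{I}$, then pass from $y\in(\sqrt{I}:S)$ to $y\in\sqrt{(I:S)}$ via Lemma \ref{44} and conclude by the criterion of Theorem \ref{42}. Your explicit argument for properness of $(I:S)$ (via Theorem \ref{34}) is a small improvement over the paper, which merely asserts it.
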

\begin{proof}
It is clear that $(I:S)$ is a proper hyperideal of $R$. Let $x \circ y \subseteq (I:S)$ for some $x,y \in R$ such that $x \notin J(R)$. Then $x \circ y \circ S \subseteq I$. 
Now we have $x \circ r \subseteq I$ for all $r \in y \circ S$. By Theorem \ref{42}, we get $r \in \sqrt{I}$. Since $y \circ S \cap \sqrt{I} \neq \varnothing$ and $I$ is a {\bf C}-hyperideal, then we conclude that $y \circ S \subseteq \sqrt{I}$. This means $y \in (\sqrt{I}:S)$ which implies $y \in \sqrt{(I:S)}$, by Lemma \ref{44}. Thus, $(I:S)$ is a quasi $J$-prime hyperideal of $R$.
\end{proof}
\begin{theorem}
Suppose that  $I$ is a quasi $J$-prime hyperideal of $R$ such that there is no quasi
$J$-prime hyperideal  which contains $I$ properly. Then the hyperideal $I$ of $R$ is  $J$-prime.
\end{theorem}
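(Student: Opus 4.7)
The plan is to argue by contradiction, leveraging Theorem \ref{45} to manufacture a strictly larger quasi $J$-prime hyperideal whenever the $J$-prime property fails for $I$. Assume $I$ is quasi $J$-prime and no quasi $J$-prime hyperideal properly contains it. To prove $I$ is $J$-prime, I would take $x,y \in R$ with $x\circ y\subseteq I$ and $x\notin J(R)$, and aim to show $y\in I$.

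First I would observe that $x\notin I$: by Theorem \ref{43} every quasi $J$-prime hyperideal satisfies $I\subseteq J(R)$, so $x\in I$ would force $x\in J(R)$, contradicting our hypothesis on $x$. This preliminary step is essential for the next one, because it guarantees that the residual $(I:x)$ is a proper hyperideal of $R$ (otherwise $1\in(I:x)$, i.e.\ $x\in 1\circ x\subseteq I$ by the $\mathbf{C}$-hyperideal property, contradicting $x\notin I$).

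Next I would invoke Theorem \ref{45} with the one-element set $S=\{x\}$. The hypothesis $S\nsubseteq J(R)$ is exactly $x\notin J(R)$, which holds, so Theorem \ref{45} yields that $(I:x)$ is itself a quasi $J$-prime hyperideal of $R$. Since $I\subseteq (I:x)$ holds for every hyperideal, and since $y\in (I:x)$ by our standing assumption $x\circ y\subseteq I$, the maximality hypothesis forces $(I:x)=I$; otherwise $(I:x)$ would be a quasi $J$-prime hyperideal strictly containing $I$. In particular $y\in I$, which is exactly the $J$-prime condition.

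I do not expect any real obstacle here, since the argument is the natural analogue of Theorem \ref{35} (which shows that a maximal $J$-prime hyperideal is prime), with Theorem \ref{43} playing the role that the containment of a $J$-prime hyperideal inside $J(R)$ played there, and Theorem \ref{45} replacing Theorem \ref{33}. The only subtlety is ensuring that $(I:x)$ is a proper hyperideal before applying Theorem \ref{45}, which is why the preliminary observation $x\notin I$ extracted from Theorem \ref{43} is needed.
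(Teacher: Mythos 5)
Your proposal is correct and follows essentially the same route as the paper: apply Theorem \ref{45} with $S=\{x\}$ to see that $(I:x)$ is quasi $J$-prime, invoke the maximality hypothesis to get $(I:x)=I$, and conclude $y\in I$. The extra preliminary check that $x\notin I$ (so that $(I:x)$ is proper) is a reasonable bit of added care, but it is already absorbed into the statement of Theorem \ref{45}, which asserts properness of $(I:S)$ whenever $S\nsubseteq J(R)$.
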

\begin{proof}
Suppose that  $I$ is a quasi $J$-prime hyperideal of $R$ such that there is no quasi
$J$-prime hyperideal  which contains $I$ properly. Let $x \circ y \subseteq I$ for some $x,y \in R$ such that $x \notin J(R)$. By Theorem \ref{45}, we conclude that $(I:x)$ is a quasi $J$-prime hyperideal of $R$. Since $I \subseteq (I:x)$, then $I=(I:x)$, by the hypothesis. Thus, we get $y \in I$. Consequently, the hyperideal $I$ of $R$ is  $J$-prime.
\end{proof}
\begin{corollary}
The hyperideal $J(R)$ of $R$ is $J$-prime if and only if $J(R)$ is quasi $J$-prime.
\end{corollary}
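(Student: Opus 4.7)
The plan is to reduce the entire statement to the observation that $\sqrt{J(R)} = J(R)$. Once this identity is established, the definition of quasi $J$-prime (namely, that $\sqrt{I}$ be $J$-prime) applied to $I = J(R)$ becomes literally the condition that $J(R)$ itself is $J$-prime, so both directions of the equivalence follow at once.

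To verify $\sqrt{J(R)} = J(R)$, the inclusion $J(R) \subseteq \sqrt{J(R)}$ is immediate from the definition of the prime radical, since every prime hyperideal containing $J(R)$ of course contains $J(R)$, so the intersection does too. For the reverse inclusion, I would invoke Lemma \ref{32}, which guarantees that every maximal hyperideal $M$ of $R$ is prime. Since each maximal $M$ contains $J(R)$ and is a prime hyperideal containing $J(R)$, the intersection $\sqrt{J(R)}$ taken over \emph{all} primes containing $J(R)$ is contained in the (possibly smaller) intersection taken over only the maximal hyperideals, which by definition is $J(R)$. Hence $\sqrt{J(R)} \subseteq J(R)$.

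With $\sqrt{J(R)} = J(R)$ in hand, the corollary is immediate: $J(R)$ is quasi $J$-prime, by definition, exactly when $\sqrt{J(R)}$ is $J$-prime, and this is exactly the statement that $J(R)$ is $J$-prime. I do not expect any genuine obstacle here — the only mildly subtle point is the careful use of Lemma \ref{32} to pass from ``all primes containing $J(R)$'' to ``all maximal hyperideals.''
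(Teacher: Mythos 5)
Your proof is correct. The identity $\sqrt{J(R)}=J(R)$ does follow exactly as you say: $J(R)\subseteq\sqrt{J(R)}$ is trivial, and since Lemma \ref{32} makes every maximal hyperideal a prime hyperideal containing $J(R)$, the intersection defining $\sqrt{J(R)}$ runs over a family that includes all maximal hyperideals, so it is contained in their intersection $J(R)$. With $\sqrt{J(R)}=J(R)$ the equivalence is then literally the definition of quasi $J$-prime applied to $I=J(R)$. The paper states the corollary without proof, but its placement immediately after the theorem on quasi $J$-prime hyperideals that are maximal among quasi $J$-prime hyperideals indicates a different intended route for the backward direction: every quasi $J$-prime hyperideal $I$ has $\sqrt{I}$ $J$-prime, hence $\sqrt{I}\subseteq J(R)$ by Theorem \ref{34}, so no quasi $J$-prime hyperideal properly contains $J(R)$, and that theorem then yields that $J(R)$ is $J$-prime; the forward direction would still need either your radical computation or the criterion of Theorem \ref{42}(iii). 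Your argument is more economical in that the single identity $\sqrt{J(R)}=J(R)$ dispatches both directions at once and does not rely on the maximality-type theorem; the paper's implicit route has the advantage of illustrating that theorem but is not shorter. The only point worth flagging, and it is cosmetic, is that your argument tacitly assumes $R$ has at least one maximal hyperideal so that $J(R)$ is proper; in the degenerate case the statement is vacuous anyway.
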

\begin{theorem} \label{0913}
Let $M$ be a maximal hyperideal of $R$. If $\langle a \circ b \rangle$ is  a quasi $J$-prime hyperideal of $R$  for each  $a,b \in R$ is  quasi $J$-prime, then so is $M$.  
\end{theorem}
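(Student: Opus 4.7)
The plan is to verify the definition of quasi $J$-prime for $M$ by showing that $\sqrt{M}$ is $J$-prime, which, after one initial reduction, turns out to be equivalent to showing $M$ itself is $J$-prime. First I would invoke Lemma \ref{32} to conclude that the maximal hyperideal $M$ is prime, and then, using our standing assumption that all hyperideals are {\bf C}-hyperideals (so that $\sqrt{M}=\{r\in R:r^n\subseteq M\text{ for some }n\}$), deduce $\sqrt{M}=M$. Thus the target becomes: $M$ is $J$-prime.

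To establish this, I would pick arbitrary $x,y\in R$ with $x\circ y\subseteq M$ and $x\notin J(R)$, and aim for $y\in M$. The crucial idea is to pass from $M$ to the much smaller principal hyperideal $\langle x\circ y\rangle$, to which the hypothesis of the theorem directly applies. Since $\langle x\circ y\rangle$ is quasi $J$-prime by hypothesis, and trivially $x\circ y\subseteq\langle x\circ y\rangle$, characterization (iii) of Theorem \ref{42} gives $x\in J(R)$ or $y\in\sqrt{\langle x\circ y\rangle}$. The first alternative is ruled out by our choice of $x$, so $y^n\subseteq\langle x\circ y\rangle$ for some $n\in\mathbb{N}$.

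To finish, I would use $\langle x\circ y\rangle\subseteq M$ (a consequence of $x\circ y\subseteq M$ and $M$ being a hyperideal), which yields $y^n\subseteq M$, and then the already established equality $\sqrt{M}=M$ to conclude $y\in M$. This shows $M$ is $J$-prime, so $\sqrt{M}=M$ is $J$-prime, and therefore $M$ is quasi $J$-prime.

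The only delicate point — really the sole ingredient doing any work — is checking that $\langle x\circ y\rangle$ is a \emph{proper} hyperideal, so that calling it quasi $J$-prime is not vacuous; this is immediate from $\langle x\circ y\rangle\subseteq M\subsetneq R$. Beyond that the argument is just a careful assembly of Lemma \ref{32}, Theorem \ref{42}(iii), and the definition of the prime radical, so I do not anticipate any genuine obstacle.
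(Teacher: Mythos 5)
Your argument is correct and follows essentially the same route as the paper's: both apply the quasi $J$-primeness of the principal hyperideal $\langle x\circ y\rangle$ (proper because it sits inside $M$) to the containment $x\circ y\subseteq\langle x\circ y\rangle$ and transfer the conclusion back to $M$. The only difference is bookkeeping: you first record $\sqrt{M}=M$ via Lemma \ref{32} and take the branch $y\in\sqrt{\langle x\circ y\rangle}\subseteq M$ of the disjunction in Theorem \ref{42}(iii), whereas the paper assumes $x\notin\sqrt{M}$ and takes the branch $y\in J(R)$; both verify the same equivalent criterion for $M$.
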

\begin{proof}
Let $M$ be a maximal hyperideal of $R$. Let $x \circ y \subseteq M$ and $x \notin \sqrt{M}$. Since $\sqrt{\langle x \circ y  \rangle } \subseteq \sqrt{M}$ and $ x \notin \sqrt{M}$, then  $x \notin \sqrt{\langle x \circ y  \rangle}$. Since  $\langle x \circ y  \rangle$ is  a quasi $J$-prime hyperideal of $R$ and $x \circ y \subseteq \langle x \circ y  \rangle$, then we obtain  $y \in J(R)$ which means $M$ is a quasi $J$-prime hyperideal of $R$. 
\end{proof}
\begin{theorem} \label{0914}
Every maximal hyperideal  of $R$ is a quasi $J$-prime hyperideal if and only if  $R$ is a local hyperring.
\end{theorem}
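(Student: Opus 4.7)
The plan is to prove both implications by assembling earlier results rather than doing new work. For the forward direction, suppose every maximal hyperideal of $R$ is quasi $J$-prime, and let $M$ be any maximal hyperideal. By definition $\sqrt{M}$ is $J$-prime, so Theorem \ref{34} gives $\sqrt{M}\subseteq J(R)$. On the other hand, $J(R)\subseteq M$ by the definition of $J(R)$ as the intersection of all maximal hyperideals, and $M\subseteq\sqrt{M}$ always. Chaining these inclusions yields $M=J(R)$. Since this holds for \emph{every} maximal hyperideal, any two maximal hyperideals coincide with $J(R)$, so $R$ has a unique maximal hyperideal, i.e. $R$ is a local hyperring.

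For the backward direction, suppose $R$ is local with unique maximal hyperideal $M$, so $J(R)=M$. By Lemma \ref{32}, $M$ is a prime hyperideal. In particular $M$ is a prime hyperideal containing $M$, and any prime hyperideal containing the proper hyperideal $M$ must equal $M$ by maximality; hence $\sqrt{M}=M$. Now by Theorem \ref{31} (part (ii)) every proper hyperideal of $R$ is $J$-prime, so $M$ is $J$-prime. Therefore $\sqrt{M}=M$ is $J$-prime, which is exactly the statement that $M$ is quasi $J$-prime.

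I do not anticipate a real obstacle here: the statement is essentially a bookkeeping exercise combining Theorem \ref{34} (every $J$-prime hyperideal sits inside $J(R)$), Lemma \ref{32} (maximals are prime, forcing $\sqrt{M}=M$), and Theorem \ref{31} (local $\Longleftrightarrow$ all proper hyperideals are $J$-prime). The only step that requires a moment of care is verifying $\sqrt{M}=M$ for a maximal hyperideal $M$, since this is what lets the proof reduce the quasi $J$-prime condition on $M$ to the ordinary $J$-prime condition and makes both directions line up.
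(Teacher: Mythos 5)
Your proof is correct and follows essentially the same route as the paper: the forward direction combines Theorem \ref{34} with the inclusion $J(R)\subseteq M$ to force $M=J(R)$, and the backward direction uses Theorem \ref{31} together with the fact that a maximal hyperideal is prime (so $\sqrt{M}=M$). If anything, your write-up is more complete than the paper's, which leaves the conclusion of the forward direction implicit and routes the backward direction through an unnecessary appeal to Theorem \ref{0913}.
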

\begin{proof}
$\Longrightarrow$ Let the maximal hyperideal $M$ of $R$ be a quasi $J$-prime hyperideal. By Theorem \ref{34}, we get $M\subseteq J(R)$. Since $M$ is maximal, then $M$ is a prime hyperideal of $R$. Hence $M=\sqrt{M}$ and so $M \subseteq J(R)$. 

$\Longleftarrow$ Let $R$ ibe a local hyperring. By Theorem \ref{31}, every proper hyperideal of $R$ is $J$-prime and so every proper hyperideal of $R$ is quasi $J$-prime. Now, the claim follows by Theorem \ref{0913}.
\end{proof}
Recall from \cite{m7m} that a proper hyperideal $I$ of $R$ is said to be a quasi primary hyperideal if $\sqrt{I}$ is prime.
 
\begin{theorem} \label{0912}
Let every prime hyperideal of $R$ be maximal. If the hyperideal $I$  of $R$ is quasi $J$-prime and $I \subseteq J(R)$, then $I$ is a quasi primary hyperideal of $R$.
\end{theorem}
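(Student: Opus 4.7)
The goal is to show that $\sqrt{I}$ is a prime hyperideal, since by the definition recalled just before the statement, $I$ is quasi primary precisely when $\sqrt{I}$ is prime. Because $I$ is quasi $J$-prime, I already know that $\sqrt{I}$ is $J$-prime; the plan is to upgrade this $J$-primeness to outright primeness by exploiting the two extra hypotheses, namely that every prime hyperideal of $R$ is maximal and that $I \subseteq J(R)$.

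The key step I would carry out first is the identity $\sqrt{I} = J(R)$. By definition, $\sqrt{I}$ is the intersection of all prime hyperideals of $R$ that contain $I$. Under the hypothesis that every prime hyperideal is maximal, these are exactly the maximal hyperideals containing $I$. But $I \subseteq J(R) \subseteq M$ for every maximal hyperideal $M$, so every maximal hyperideal of $R$ already contains $I$, and therefore
\[
\sqrt{I} \;=\; \bigcap\{M : M \text{ is a maximal hyperideal of } R\} \;=\; J(R).
\]

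With this identity in hand, the conclusion follows immediately from the $J$-prime property of $\sqrt{I}$. If $x \circ y \subseteq \sqrt{I}$ for some $x,y \in R$, then since $\sqrt{I}$ is $J$-prime, either $x \in J(R)$ or $y \in \sqrt{I}$; using $\sqrt{I} = J(R)$, the first alternative reads $x \in \sqrt{I}$, so in either case one of $x,y$ lies in $\sqrt{I}$, proving that $\sqrt{I}$ is prime and hence that $I$ is quasi primary.

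I do not anticipate any genuine obstacle here; the only subtle point is ensuring that the collection of primes containing $I$ is nonempty so that $\sqrt{I}$ is proper, but this is automatic because $\sqrt{I}$ is $J$-prime and $J$-prime hyperideals are proper by definition, which in turn forces the existence of at least one maximal hyperideal containing $I$.
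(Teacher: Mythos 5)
Your proof is correct and rests on the same pivot as the paper's: the identity $\sqrt{I}=J(R)$ (which the paper asserts from the hypotheses without spelling out, and which you justify carefully, implicitly via the fact that maximal hyperideals are prime, Lemma \ref{32}) combined with the $J$-primeness of $\sqrt{I}$. The only difference is cosmetic: the paper detours through $x^n\circ y^n\subseteq I$ and the characterization in Theorem \ref{42}, whereas you apply the $J$-prime condition directly to $\sqrt{I}$, which is cleaner and equally valid.
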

\begin{proof}
We show that $\sqrt{I}$ is prime. Let $x \circ y \subseteq \sqrt{I}$ for some $x,y \in R$ such that $x \notin \sqrt{I}$. Hence we have $x^n \circ y^n \subseteq I$ for some $n \in \mathbb{N}$. By the hypothesis, we conclude that $\sqrt{I}=J(R)$. Since $x^n \notin J(R)$ and the hyperideal $I$ of $R$ is quasi $J$-prime, then $y^n \subseteq \sqrt{I}$, by Theorem \ref{42}. This means $y \in \sqrt{I}$. Thus $I$ is a quasi primary hyperideal of $R$.
\end{proof}
\begin{theorem}
Let every prime hyperideal of $R$ be maximal and $I$ be a hyperideal of $R$ such that $I \subseteq J(R)$. Then $I$   is quasi $J$-prime if and only if  the multiplicative hyperring $R$ is local with the maximal hyperideal $\sqrt{I}$.
\end{theorem}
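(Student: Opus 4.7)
The plan is to handle the two implications separately, relying on Theorem \ref{0912} for the forward direction and Theorem \ref{31} for the reverse.

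For $(\Longrightarrow)$, I would assume $I$ is quasi $J$-prime. Because every prime hyperideal of $R$ is maximal and $I \subseteq J(R)$, Theorem \ref{0912} delivers that $I$ is quasi primary, so $\sqrt{I}$ is prime; the blanket hypothesis then promotes $\sqrt{I}$ to a maximal hyperideal. To establish uniqueness of this maximal, pick any maximal hyperideal $M$ of $R$: by Lemma \ref{32}, $M$ is prime, and since $I \subseteq J(R) \subseteq M$, the maximal $M$ lies in the family of primes containing $I$ whose intersection defines $\sqrt{I}$, forcing $\sqrt{I} \subseteq M$. Maximality of $\sqrt{I}$ then yields $M = \sqrt{I}$, so $R$ is local with maximal hyperideal $\sqrt{I}$.

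For $(\Longleftarrow)$, suppose $R$ is local with maximal hyperideal $\sqrt{I}$. Then $\sqrt{I}$ is proper and coincides with $J(R)$. Since $R$ is local, Theorem \ref{31} gives that every proper hyperideal of $R$ is $J$-prime; in particular $\sqrt{I}$ itself is $J$-prime, which is precisely the definition of $I$ being quasi $J$-prime (note that $I \subseteq \sqrt{I}$ ensures $I$ is proper).

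The decisive leverage is Theorem \ref{0912}, which immediately upgrades $\sqrt{I}$ to a prime, and hence maximal, hyperideal. The only mildly subtle observation is that $I \subseteq J(R)$ forces every maximal hyperideal of $R$ to contain $\sqrt{I}$, via Lemma \ref{32} together with the definition of the prime radical. I do not foresee a real obstacle: both directions reduce to straightforward applications of results already proved in this section, and no new computation is needed.
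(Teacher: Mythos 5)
Your proof is correct and follows essentially the same route as the paper: the forward direction rests on Theorem \ref{0912} together with the hypothesis to make $\sqrt{I}$ prime and hence maximal, and the reverse direction reduces to the locality results of this section (you invoke Theorem \ref{31} directly where the paper cites Theorem \ref{0914}, which amounts to the same thing). If anything, your forward direction is cleaner: the paper inserts an unjustified intermediate claim that $I=Q^n$ for some prime hyperideal $Q$, whereas your direct observation that every maximal hyperideal is prime (Lemma \ref{32}), contains $I$, hence contains and therefore equals $\sqrt{I}$, is all that is actually needed.
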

\begin{proof}
$\Longrightarrow $ Let $I$ be a quasi $J$-prime hyperideal of $R$. Hence $\sqrt{I}$ is a prime hyperideal of $R$, by Theorem \ref{0912}. By the hypothesis, $\sqrt{I}$ is a maximal hyperideal of $R$. Then there exists some prime hyperideal $Q$ such that $I=Q^n$ for some $n \in \mathbb{N}$. This means $\sqrt{I}=Q$ is maximal and so $J(R) \subseteq \sqrt{I}=Q$. Since $I \subseteq J(R)$, then we conclude that $\sqrt{I}=J(R)$. Thus the multiplicative hyperring $R$ is local with the maximal hyperideal $\sqrt{I}$.

$\Longleftarrow$ The claim follows by Theorem \ref{0914}.
\end{proof}
\begin{theorem}
\begin{itemize}
\item[\rm(i)]~If $I_i$ is a quasi $J$-prime hyperideal of $R$ for each $1 \leq i \leq n$, then is so $\bigcap_{i=1}^nI_i$. 
\item[\rm(ii)]~If $I_i$ is a quasi $J$-prime hyperideal of $R$ for each $1 \leq i \leq n$, then is so $\Pi_{i=1}^nI_i$.
\end{itemize}
\end{theorem}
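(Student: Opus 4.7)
The plan is to reduce both parts to the already-proved intersection result Theorem \ref{entersection}, which says that any intersection of $J$-prime hyperideals is again $J$-prime. By definition, a proper hyperideal $J$ is quasi $J$-prime exactly when $\sqrt{J}$ is $J$-prime, so for (i) I only need to identify $\sqrt{\bigcap_{i=1}^n I_i}$ with a suitable intersection of known $J$-prime hyperideals, and for (ii) I need the same thing for $\sqrt{\prod_{i=1}^n I_i}$.

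For (i), I would first establish the radical identity
\[ \sqrt{\textstyle\bigcap_{i=1}^n I_i} \;=\; \bigcap_{i=1}^n \sqrt{I_i}. \]
The inclusion $\subseteq$ is immediate from the monotonicity of the radical applied to $\bigcap I_i \subseteq I_j$. For $\supseteq$, given $x$ with $x^{k_i}\subseteq I_i$ for each $i$ (possible since all hyperideals are \textbf{C}-hyperideals and the excerpt records that then $\sqrt{I_i}=\{r\in R:r^m\subseteq I_i\}$), I take $N=\max_i k_i$ and use that $x^N = x^{k_i}\circ x^{N-k_i}\subseteq I_i\circ R\subseteq I_i$, hence $x^N\subseteq \bigcap I_i$ and $x\in\sqrt{\bigcap I_i}$. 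Since each $\sqrt{I_i}$ is $J$-prime by hypothesis, Theorem \ref{entersection} gives that the intersection is $J$-prime, so $\bigcap I_i$ is quasi $J$-prime.

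For (ii), I plan to show that the product has the same radical as the intersection, namely
\[ \sqrt{\textstyle\prod_{i=1}^n I_i} \;=\; \sqrt{\textstyle\bigcap_{i=1}^n I_i} \;=\; \bigcap_{i=1}^n \sqrt{I_i}. \]
The inclusion $\sqrt{\prod I_i}\subseteq \sqrt{\bigcap I_i}$ follows from $\prod I_i\subseteq \bigcap I_i$. For the reverse, if $x\in \bigcap \sqrt{I_i}$ with $x^{k_i}\subseteq I_i$, then
\[ x^{k_1+k_2+\cdots+k_n} \;\subseteq\; x^{k_1}\circ x^{k_2}\circ\cdots\circ x^{k_n} \;\subseteq\; I_1\circ I_2\circ\cdots\circ I_n \;=\; \textstyle\prod I_i, \]
so $x\in\sqrt{\prod I_i}$. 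Then (ii) follows from part (i) (or directly from Theorem \ref{entersection}) applied to the $J$-prime hyperideals $\sqrt{I_i}$.

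The only delicate point is the radical computation in the hyperring setting, since in general one must distinguish the containment-based definition of $\sqrt{I}$ from the elementwise one. This is resolved by the standing \textbf{C}-hyperideal assumption recalled in the preliminaries, which yields the elementwise description of $\sqrt{I}$ and lets the usual ring-theoretic manipulation of $x^{k_i}\circ x^{k_j}$ go through verbatim; once these identities are in hand, both statements collapse to a direct appeal to Theorem \ref{entersection}.
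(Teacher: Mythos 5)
Your proposal is correct and follows essentially the same route as the paper: part (i) via the identity $\sqrt{\bigcap_{i=1}^n I_i}=\bigcap_{i=1}^n\sqrt{I_i}$ (which the paper cites from Proposition 3.3 of \cite{ref1} rather than reproving) together with Theorem \ref{entersection}, and part (ii) via the same key computation $y^{t_1+\cdots+t_n}\subseteq\prod_{i=1}^n I_i$ for $y\in\bigcap_{i=1}^n\sqrt{I_i}$, the only cosmetic difference being that the paper verifies the criterion of Theorem \ref{42} directly instead of first recording $\sqrt{\prod_{i=1}^n I_i}=\bigcap_{i=1}^n\sqrt{I_i}$.
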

\begin{proof}
(i) By Proposition 3.3. in \cite{ref1}, we have $\sqrt{\bigcap_{i=1}^nI_i}=\bigcap_{i=1}^n \sqrt{I_i}$. Since $\sqrt{I_i}$ is a  $J$-prime hyperideal of $R$ for each $1 \leq i \leq n$, we conclude that $\bigcap_{i=1}^n \sqrt{I_i}$ is a  $J$-prime hyperideal of $R$, by Theorem \ref{entersection}. Thus, $\sqrt{\bigcap_{i=1}^nI_i}$ is a  $J$-prime hyperideal of $R$ which means $\bigcap_{i=1}^nI_i$ is a quasi $J$-prime hyperideal of $R$.

(ii) Let $x \circ y \subseteq \Pi_{i=1}^nI_i$ for some $x,y \in R$ such that $x \notin J(R)$. Since $I_i$ is a quasi $J$-prime hyperideal of $R$ for each $1 \leq i \leq n$ and $\Pi_{i=1}^nI_i \subseteq \bigcap_{i=1}^nI_i$, we conclude that $y \in \sqrt{I_i}$ for each $1 \leq i \leq n$. This means $y^{t_i} \subseteq I_i$ for each $1 \leq i \leq n$. Put $t=t_1+t_2+...+t_n$. Therefore, $y^{t}=y^{t_1} \circ y^{t_2} \circ ... \circ y^{t_n} \subseteq \Pi_{i=1}^nI_i$ which implies $y \in \sqrt{\Pi_{i=1}^nI_i}$. Consequently, the hyperideal $\Pi_{i=1}^nI_i$ of $R$ is quasi $J$-prime.
\end{proof}
\section{2-absorbing $J$-prime hyperideals}
In this section , we extend the notion of $J$-prime to 2-absorbing $J$-prime hyperideals and give some properties of them.
\begin{definition}
Let $I$ be a proper hyperideal of $R$. $I$ is called a 2-absorbing $J$-prime hyperideal of $R$ if whenever $x,y,z \in R$ with $x \circ y \circ z \subseteq I$, then $x \circ y \subseteq I$ or $x \circ z \subseteq J(R)$ or $y \circ z \subseteq J(R)$.
\end{definition}
\begin{example} In Example \ref{exa1}, let $A=\{2,4\}$. Then the principal hyperideal $\langle 15 \rangle$ is  a 2-absorbing $J$-prime hyperideal. 
\end{example}
\begin{example}
Consider the ring $(\{\bar{0},\bar{1},\bar{2},\bar{4},\bar{5}\}=\mathbb{Z}_6,\oplus,\star)$ that for all $\bar{x}, \bar{y}\in \mathbb{Z}_6$, $\bar{x} \oplus \bar{y} $ and $\bar{x} \star \bar{y}$ are the remainder of $\frac{x+y}{6}$ and $\frac{x \cdot y}{6}$, respectively, which $"+"$ and $"\cdot"$ are
ordinary addition and multiplication, and $x,y \in \mathbb{Z}$. We define the hyperoperation $\bar{x} \odot \bar{y}=\{\bar{xy},\bar{2xy},\bar{3xy},\bar{4xy},\bar{5xy}\}$, for all $\bar{x},\bar{y} \in\mathbb{Z}_6$. In the commutative multiplicative hyperring $(\mathbb{Z},\oplus,\odot)$, hyperideal $\{\bar{0}\}$ is 2-absorbing $J$-prime.
\end{example}
\begin{theorem} \label{61}
If $I$ is a $J$-prime hyperideal of $R$, then $I$ is a 2-absorbing $J$-prime hyperideal of $R$.
\end{theorem}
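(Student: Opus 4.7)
The plan is to take arbitrary elements $x,y,z \in R$ with $x \circ y \circ z \subseteq I$ and then split on whether $x \in J(R)$, aiming to show that in each case one of the three disjuncts in the definition of a $2$-absorbing $J$-prime hyperideal holds. The two facts from earlier in the paper that I will lean on are Theorem \ref{34}, which tells us that $I \subseteq J(R)$, and the observation that $J(R)$, being an intersection of maximal hyperideals, is itself a hyperideal of $R$ (so it absorbs multiplication from either side).

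First I would handle the easy case: if $x \in J(R)$, then because $J(R)$ is a hyperideal we immediately obtain $x \circ z \subseteq J(R)$, giving the second disjunct. Second, I would handle the case $x \notin J(R)$ by reducing to the element-wise definition of $J$-primeness. For any $u \in y \circ z$ we have $x \circ u \subseteq x \circ y \circ z \subseteq I$, so the $J$-prime condition forces $u \in I$ (since $x \notin J(R)$). Letting $u$ range over $y \circ z$ gives $y \circ z \subseteq I$, and then an appeal to Theorem \ref{34} yields $y \circ z \subseteq I \subseteq J(R)$, so the third disjunct holds. (One could alternatively invoke Theorem \ref{36}(iii) applied to the hyperideal $\langle y \circ z\rangle$ to reach the same conclusion, but the element-wise route is more direct.)

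There is essentially no obstacle here: the argument is a two-line case split once one recognizes that the conclusion of a $J$-prime hyperideal already forces $I$ to sit inside $J(R)$, so producing $y \circ z \subseteq J(R)$ is automatic as soon as we know $y \circ z \subseteq I$. The one small point that deserves a word of justification in the write-up is that $J(R)$ really is a hyperideal, so that $x \in J(R)$ entails $x \circ z \subseteq J(R)$; this is immediate from its definition as the intersection of all maximal hyperideals of $R$ and the fact that intersections of hyperideals are hyperideals.
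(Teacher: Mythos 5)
Your proof is correct and takes essentially the same route as the paper's: a short two-case argument that always lands in the second or third disjunct of the definition, obtained by applying the $J$-prime condition to $x$ paired with elements of the product of the other two factors and then using Theorem \ref{34} to upgrade ``$\subseteq I$'' to ``$\subseteq J(R)$''. The only cosmetic difference is that you split on whether $x\in J(R)$ at the outset and use absorption by the hyperideal $J(R)$, whereas the paper fixes $u\in x\circ z$ and lets the $J$-prime dichotomy ($u\in J(R)$ or $y\in I$) perform the case split, invoking the {\bf C}-hyperideal property of $J(R)$ at that point.
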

\begin{proof}
Let $I$ be a $J$-prime hyperideal of $R$. Suppose that $x \circ y \circ z \subseteq I$ for some $x,y,z \in R$. Choose  $u \in x \circ z$. Since $y \circ u \subseteq I$ and $I$ is a $J$-prime hyperideal of $R$, we get $y \in I$ or $u \in J(R)$. In the former case, we have $y \circ z \subseteq I$ which means $y \circ z \subseteq J(R)$, by Theorem \ref{34}. In the second case, we obtain $x \circ z \subseteq J(R)$ as $J(R)$ is a {\bf C}-hyperideal of $R$.
\end{proof}
\begin{theorem} \label{62}
If $I$ is a 2-absorbing $J$-prime hyperideal of $R$, then $I \subseteq J(I)$.
\end{theorem}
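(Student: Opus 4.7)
The plan is to mimic the proof of Theorem \ref{34} but to feed three factors (rather than two) into the defining condition of a 2-absorbing $J$-prime hyperideal, and then to observe that the intended containment $I \subseteq J(I)$ is automatic once $I$ is shown to lie in $J(R)$. Fix an arbitrary $x \in I$. Using the identity axiom twice, we have $1 \in 1 \circ 1$ and $x \in 1 \circ x$, so by associativity $x$ belongs to the triple product $1 \circ 1 \circ x$. Since $I$ is a {\bf C}-hyperideal and this triple product meets $I$ at the element $x$, we conclude $1 \circ 1 \circ x \subseteq I$. This setup is the natural 2-absorbing analogue of the step $x \circ 1 \subseteq I$ used in Theorem \ref{34}.

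Now invoke the 2-absorbing $J$-prime property applied to the triple $1,1,x$: one of $1 \circ 1 \subseteq I$, $1 \circ x \subseteq J(R)$, or $1 \circ x \subseteq J(R)$ must hold. The first alternative is ruled out because $1 \in 1 \circ 1$, so it would force $1 \in I$, contradicting properness of $I$. The remaining two alternatives coincide and give $1 \circ x \subseteq J(R)$. Combined with $x \in 1 \circ x$, we obtain $x \in J(R)$. Since $x \in I$ was arbitrary, $I \subseteq J(R)$.

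Finally, $J(R) \subseteq J(I)$ because $J(I)$ is defined as the intersection of a subfamily of the maximal hyperideals defining $J(R)$, so $I \subseteq J(R) \subseteq J(I)$ as required. The argument contains no genuine obstacle; the only point demanding care is verifying that $x$ itself (not merely some element produced from $x$) lies in $1 \circ 1 \circ x$, which is what legitimizes the initial application of the {\bf C}-hyperideal hypothesis.
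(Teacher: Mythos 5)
Your proof is correct and follows essentially the same route as the paper's: both hinge on observing that $x \in 1 \circ 1 \circ x$ forces $1 \circ 1 \circ x \subseteq I$ by the {\bf C}-hyperideal property, then applying the 2-absorbing $J$-prime condition to the triple $1,1,x$ and ruling out $1 \circ 1 \subseteq I$ by properness. The only cosmetic difference is that you argue directly (establishing $I \subseteq J(R) \subseteq J(I)$) while the paper phrases it as a proof by contradiction.
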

\begin{proof}
Let $I$ be a 2-absorbing $J$-prime hyperideal of $R$. We suppose that $I \nsubseteq J(I)$ and look for a contradiction. From $I \nsubseteq J(I)$ it follows that there exists $x \in I$ such that $x \notin J(R)$. Since  $1 \circ 1 \circ x \subseteq I$ and  $I$ is a 2-absorbing $J$-prime hyperideal of $R$, we have $1 \circ 1 \subseteq I$, a contradiction or $x \in 1 \circ x \subseteq J(R)$, a contradiction. Thus,  $I \subseteq J(I)$.
\end{proof}
Recall from \cite{mah} that a proper hyperideal $I$ of $R$ is said to be an 2-absorbing primary hyperideal if $x \circ y \circ z \subseteq I$ implies $x \circ y \subseteq I$ or $x \circ z  \subseteq \sqrt{I}$ or $y \circ z \subseteq \sqrt{I}$ for any $x,y,z \in R$.\\
The next Theorem shows that the inverse of Theorem \ref{62} is true if $I$ is a 2-absorbing primary hyperideal of $R$
\begin{theorem} \label{63}
Let $I$ be a 2-absorbing primary hyperideal of $R$ and  $I \subseteq J(R)$. Then $I$ is a 2-absorbing $J$-prime hyperideal of $R$. 
\end{theorem}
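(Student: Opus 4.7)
The plan is to verify directly from the two definitions that the $\sqrt{I}$ in the conclusion of the $2$-absorbing primary hypothesis can be replaced by $J(R)$ under the assumption $I \subseteq J(R)$. The entire argument reduces to establishing the single inclusion $\sqrt{I} \subseteq J(R)$; once that is in hand, the rest is a three-line case split.

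First I would prove the key inclusion $\sqrt{I} \subseteq J(R)$. By Lemma~\ref{32}, every maximal hyperideal of $R$ is prime, so $J(R)$, being the intersection of all maximal hyperideals, is an intersection of prime hyperideals. Since $I \subseteq J(R)$, every maximal (hence prime) hyperideal appearing in this intersection contains $I$. Hence each of them also contains $\sqrt{I}$ (as $\sqrt{I}$ is the intersection of \emph{all} prime hyperideals containing $I$, it is contained in each particular such prime). Intersecting over all maximal hyperideals yields $\sqrt{I} \subseteq J(R)$.

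Next I would carry out the main argument. Suppose $x \circ y \circ z \subseteq I$ for some $x,y,z \in R$. Because $I$ is $2$-absorbing primary, one of the three alternatives holds: $x \circ y \subseteq I$, or $x \circ z \subseteq \sqrt{I}$, or $y \circ z \subseteq \sqrt{I}$. The first alternative is already one of the conclusions in the definition of a $2$-absorbing $J$-prime hyperideal. In the remaining two alternatives, the inclusion $\sqrt{I} \subseteq J(R)$ established above promotes $x \circ z \subseteq \sqrt{I}$ to $x \circ z \subseteq J(R)$, and similarly $y \circ z \subseteq \sqrt{I}$ to $y \circ z \subseteq J(R)$. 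In every case we obtain exactly the conclusion required for $I$ to be $2$-absorbing $J$-prime.

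There is essentially no obstacle here; the only subtlety worth double-checking is the step $\sqrt{I} \subseteq J(R)$, which relies on Lemma~\ref{32} together with the standing assumption that all hyperideals are \textbf{C}-hyperideals (so that the characterisation of $\sqrt{I}$ as the intersection of the primes above $I$ is available, as recalled from \cite{ref1}).
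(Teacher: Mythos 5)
Your proposal is correct and follows essentially the same route as the paper: the paper's proof also reduces to replacing $\sqrt{I}$ by $J(R)$ in the $2$-absorbing primary condition, arguing contrapositively that $x\circ z, y\circ z \nsubseteq J(R)$ forces $x\circ z, y\circ z \nsubseteq \sqrt{I}$. Your version is actually a touch more careful, since you spell out the inclusion $\sqrt{I}\subseteq J(R)$ via Lemma~\ref{32}, which the paper uses implicitly (and note this inclusion already follows from the definition of $\sqrt{I}$ as the intersection of the primes containing $I$, without needing the \textbf{C}-hyperideal hypothesis).
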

\begin{proof}
Suppose that $I$ is a 2-absorbing primary hyperideal of $R$ such that  $I \subseteq J(R)$. Let $x\circ y \circ z \subseteq I$ for some $x,y,z \in R$ such that $x \circ z, y \circ z \nsubseteq J(R)$. Then we conclude that $x \circ z, y \circ z \nsubseteq \sqrt{I}$. This implies that $x \circ y \subseteq I$ as $I$ is a 2-absorbing primary hyperideal of $R$. Consequently, $I$ is a 2-absorbing $J$-prime hyperideal of $R$. 
\end{proof}
\begin{theorem} \label{64}
Let the multiplicative hyperring $R$ has at most two maximal hyperideals. Suppose that the hyperideal $I$ of $R$ is 2-absorbing primary that is not quasi primary. Then $I$ is a 2-absorbing $J$-prime hyperideal of $R$.
\end{theorem}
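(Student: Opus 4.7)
The plan is to reduce the claim to Theorem~\ref{63}, which already gives the conclusion under the extra hypothesis $I\subseteq J(R)$. Since every maximal hyperideal of $R$ is prime by Lemma~\ref{32}, $J(R)$ is an intersection of primes and hence is a radical hyperideal; consequently $I\subseteq J(R)$ is equivalent to $\sqrt{I}\subseteq J(R)$. It therefore suffices to prove $\sqrt{I}\subseteq J(R)$ and then invoke Theorem~\ref{63}.

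To control $\sqrt{I}$, I would first verify, by an argument mimicking the ring-theoretic case, that whenever $I$ is 2-absorbing primary its radical $\sqrt{I}$ is itself a 2-absorbing hyperideal: given $x\circ y\circ z\subseteq\sqrt{I}$, some power $x^n\circ y^n\circ z^n$ lies in $I$, and the 2-absorbing primary hypothesis on $I$ together with the equality $\sqrt{\sqrt{I}}=\sqrt{I}$ forces one of $x\circ y,\, x\circ z,\, y\circ z$ into $\sqrt{I}$. Because $I$ is not quasi primary, $\sqrt{I}$ fails to be prime, so the standard structure theorem for 2-absorbing hyperideals yields two distinct prime hyperideals $P_1,P_2$ that are the only primes minimal over $\sqrt{I}$, and $\sqrt{I}=P_1\cap P_2$.

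Finally I would use the hypothesis that $R$ has at most two maximal hyperideals $M_1,M_2$ (possibly coinciding). Each $P_i$ is a proper prime and hence is contained in some $M_j$, and I would argue by cases that every maximal hyperideal of $R$ contains at least one of $P_1,P_2$; this will give $\sqrt{I}=P_1\cap P_2\subseteq M_1\cap M_2=J(R)$. When $R$ is local the containment is automatic since the unique maximal contains every prime, and when $P_1\subseteq M_1$ and $P_2\subseteq M_2$ in distinct placements the containment is immediate. The hard part will be the configuration where both $P_1,P_2$ lie inside one maximal, say $M_1$, while $M_2$ contains neither; here one must lean on the fact that $P_1,P_2$ are the \emph{only} minimal primes of $\sqrt{I}$ and that any prime containing $\sqrt{I}$ must contain one of them, together with the 2-absorbing structure of $\sqrt{I}$, to rule this configuration out. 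Once $\sqrt{I}\subseteq J(R)$ is in hand, Theorem~\ref{63} closes the argument.
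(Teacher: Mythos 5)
Your overall route coincides with the paper's: show that $\sqrt{I}$ is an intersection $P_1\cap P_2$ of two prime hyperideals (you get there by first proving $\sqrt{I}$ is 2-absorbing and then invoking a structure theorem, while the paper cites Theorem 4.5 of \cite{mah} directly for 2-absorbing primary hyperideals, so your detour is harmless but unnecessary), use the failure of quasi primariness to exclude the case $\sqrt{I}=P$, deduce $\sqrt{I}\subseteq J(R)$ from the bound on the number of maximal hyperideals, and finish with Theorem~\ref{63}. Up to that point the two arguments match step for step.

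The genuine problem is the step you yourself flag as ``the hard part'': the configuration in which both $P_1$ and $P_2$ lie in one maximal hyperideal $M_1$ while $M_2$ contains neither. The facts you propose to lean on --- that $P_1,P_2$ are the only minimal primes over $\sqrt{I}$ and that any prime containing $\sqrt{I}$ contains one of them --- say nothing about a maximal hyperideal that does not contain $\sqrt{I}$ at all, so they cannot rule this configuration out, and your chain $\sqrt{I}=P_1\cap P_2\subseteq M_1\cap M_2=J(R)$ does not close. Worse, the configuration is realizable: already in the classical setting (trivial hyperoperation $a\circ b=\{ab\}$, under which every ideal is a {\bf C}-hyperideal and all the notions reduce to their ring-theoretic counterparts), localize $k[x,y]$ at the complement of $(x,y)\cup(x-1)$ to get a hyperring with exactly two maximal hyperideals, and take $I=(x)\cap(y)$. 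This $I$ is 2-absorbing, hence 2-absorbing primary, and is not quasi primary, yet $\sqrt{I}=(x)\cap(y)\not\subseteq(x-1)$, so $\sqrt{I}\not\subseteq J(R)$ and in fact $x\circ 1\circ y\subseteq I$ violates the 2-absorbing $J$-prime condition. So the missing step is not merely unproved but false in general. You should know that the paper's own proof is no better here: it simply asserts $\sqrt{I}\subseteq J(R)$ ``as $R$ has at most two maximal hyperideals'' without addressing this case, so the gap you identified is real and is not repaired in the source either.
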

\begin{proof}
Assume that the multiplicative hyperring $R$ has at most two maximal hyperideals. Let the hyperideal $I$ of $R$ be 2-absorbing primary. Then $\sqrt{I}=P$ or $\sqrt{I}=P_1 \cap P_2$ for some prime hyperideals of $R$, by Theorem 4.5 in \cite{mah}. In the former case, we have a contradiction since $I$ is not a quasi primary hyperideal of $R$. Therefore, we obtain $I \subseteq \sqrt{I} \subseteq J(R)$ as the multiplicative hyperring $R$ has at most two maximal hyperideals. Thus, $I$ is a 2-absorbing $J$-prime hyperideal of $R$, by Theorem \ref{63}.
\end{proof}
\begin{corollary} \label{65}
Let $R_1$ and $R_2$ be two local multiplicative hyperrings. Then every 2-absorbing primary hyperideal of $R_1 \times R_2$ that is not quasi primary is 2-absorbing $J$-prime.
\end{corollary}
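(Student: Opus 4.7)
The plan is to reduce Corollary~\ref{65} immediately to Theorem~\ref{64}. To do this, the only work is to verify that $R_1 \times R_2$ has at most (indeed, exactly) two maximal hyperideals whenever each $R_i$ is local. Once this is in place, the hypothesis of Theorem~\ref{64} is met and the conclusion transfers verbatim to the 2-absorbing primary (non quasi primary) hyperideal $I$ of $R_1 \times R_2$.

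First I would identify two explicit maximal hyperideals. Let $M_1$ (resp.\ $M_2$) be the unique maximal hyperideal of $R_1$ (resp.\ $R_2$). I would check that $M_1 \times R_2$ and $R_1 \times M_2$ are hyperideals of $R_1 \times R_2$ (componentwise check, using the definition of the hyperoperation on the product recalled just before the theorem block for $J$-prime hyperideals on products), and that they are maximal. For maximality of $M_1 \times R_2$, suppose $M_1 \times R_2 \subsetneq K$ for some hyperideal $K$; pick $(a,b) \in K$ with $a \notin M_1$. Since $R_1$ is local, $a \in U(R_1)$, so there exists $a' \in R_1$ with $1 \in a \circ_1 a'$, which gives $(1,1) \in (a,b)\circ(a',0) + (0, 1-b')$-type expressions forcing $(1,1) \in K$, hence $K = R_1 \times R_2$. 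The case $R_1 \times M_2$ is symmetric.

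Next I would show these are the \emph{only} maximal hyperideals. Let $N$ be any maximal hyperideal of $R_1 \times R_2$. If $N \nsubseteq M_1 \times R_2$ and $N \nsubseteq R_1 \times M_2$, then there exist $(a,b), (c,d) \in N$ with $a \notin M_1$ and $d \notin M_2$, so $a \in U(R_1)$ and $d \in U(R_2)$. Combining these elements inside $N$ (using $(a,b)\circ(a',0)$ and $(c,d)\circ(0,d')$ with $a', d'$ witnessing the units, and then adding) produces $(1,1) \in N$, contradicting properness. Thus $N \subseteq M_1 \times R_2$ or $N \subseteq R_1 \times M_2$, and by maximality of $N$ together with the maximality established in the previous paragraph, equality holds in one of the two cases.

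Having shown $R_1 \times R_2$ has exactly two maximal hyperideals, the multiplicative hyperring $R_1 \times R_2$ satisfies the hypothesis of Theorem~\ref{64}. Applying that theorem to the given 2-absorbing primary, non quasi primary hyperideal $I$ of $R_1 \times R_2$ yields that $I$ is 2-absorbing $J$-prime, which is the desired conclusion. The main obstacle I anticipate is the second paragraph: combining the witnessing unit coordinates from two distinct elements of $N$ into a single unit of $R_1 \times R_2$ requires care because the hyperoperation produces a set rather than a single element, so one has to use that the hyperideal is closed under the hyperoperation and under sums to capture $(1,1)$ rather than merely $(1,1)$ lying in some hyperproduct that is already absorbed by $N$; the \textbf{C}-hyperideal assumption in force throughout the paper makes this step legitimate.
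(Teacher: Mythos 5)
Your proposal is correct and follows exactly the route the paper intends: Corollary \ref{65} is stated there without proof as an immediate consequence of Theorem \ref{64}, and your verification that $R_1 \times R_2$ has exactly the two maximal hyperideals $M_1 \times R_2$ and $R_1 \times M_2$ supplies the one missing ingredient. The unit-combination step you flag at the end is handled under the same conventions the paper itself uses for products (e.g.\ its assertion that $(0,0) \in (1,0)\circ(0,1)$), so it is legitimate in this framework.
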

\begin{theorem} \label{66}
Let $I$ be a 2-absorbing $J$-prime of $R$. Then for each $a,b \in I$ with $a \circ b \nsubseteq I$, $(I:a \circ b) \subseteq (J(R):a)$ or $(I:a \circ b) \subseteq (J(R):b)$.
\end{theorem}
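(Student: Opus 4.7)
The plan is to apply the 2-absorbing $J$-prime condition with the factors relabeled so that the pair $(a,b)$ occupies the distinguished slot, and then upgrade a union-containment to a single inclusion by a standard ``ideal-avoidance'' argument.

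First I would take an arbitrary $r \in (I:a\circ b)$, so that $r\circ a\circ b \subseteq I$. By commutativity of $\circ$ this is the same containment as $a\circ b\circ r \subseteq I$. Applying the defining property of 2-absorbing $J$-prime hyperideals with the roles $x=a$, $y=b$, $z=r$ yields one of the three alternatives $a\circ b\subseteq I$, $\;a\circ r\subseteq J(R)$, or $b\circ r\subseteq J(R)$. The hypothesis $a\circ b\nsubseteq I$ rules out the first, so $r\in (J(R):a)\cup (J(R):b)$. Since $r$ was arbitrary, this gives the union containment
\[(I:a\circ b)\;\subseteq\;(J(R):a)\cup (J(R):b).\]

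Next I would upgrade this to containment in a single term by a prime-avoidance-style argument using only the abelian group structure on $(R,+)$. Suppose for contradiction that $(I:a\circ b)\nsubseteq (J(R):a)$ and $(I:a\circ b)\nsubseteq (J(R):b)$, and pick $u \in (I:a\circ b)\setminus (J(R):a)$ and $v \in (I:a\circ b)\setminus (J(R):b)$. The union containment forces $u\in (J(R):b)$ and $v\in (J(R):a)$. Since $(I:a\circ b)$ is itself a hyperideal, $u+v\in (I:a\circ b)$ and therefore $u+v$ lies in $(J(R):a)$ or $(J(R):b)$. In the first case, $u=(u+v)-v\in (J(R):a)$ contradicts the choice of $u$; in the second, $v=(u+v)-u\in (J(R):b)$ contradicts the choice of $v$. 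Either outcome is impossible, so the desired dichotomy holds.

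The main conceptual step is the relabeling in the first paragraph: the 2-absorbing $J$-prime definition picks out the pair $(x,y)$ as the one permitted to lie in $I$, so the only way to exploit the hypothesis $a\circ b\nsubseteq I$ is to place $a$ and $b$ in exactly those two slots. Once this is noticed, every remaining step is routine. The avoidance argument is the usual one, and no further properties of $J(R)$ or of the hyperring (beyond commutativity of $\circ$ and the group structure of $(R,+)$) are required.
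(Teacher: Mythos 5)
Your proof is correct, and it is in fact more complete than the paper's own. The first half coincides exactly with the paper's argument: for $c\in(I:a\circ b)$ one applies the defining property of a 2-absorbing $J$-prime hyperideal to $a\circ b\circ c\subseteq I$, rules out the alternative $a\circ b\subseteq I$ by hypothesis, and concludes $c\in(J(R):a)\cup(J(R):b)$. At that point the paper simply asserts the disjunction $(I:a\circ b)\subseteq(J(R):a)$ or $(I:a\circ b)\subseteq(J(R):b)$, which does not follow from the pointwise union containment alone; your subgroup-avoidance step (choosing $u\in(I:a\circ b)\setminus(J(R):a)$ and $v\in(I:a\circ b)\setminus(J(R):b)$ and testing $u+v$) is precisely what is needed to close that gap. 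The one detail you should make explicit is that the avoidance argument uses that $(I:a\circ b)$, $(J(R):a)$ and $(J(R):b)$ are additive subgroups of $(R,+)$: this holds because $I$ and $J(R)$ are hyperideals and $(s-t)\circ a\subseteq s\circ a - t\circ a$ by the subdistributivity axiom, so each of these residuals is closed under subtraction. With that one-line verification added, your argument is a correct and strictly more rigorous proof of the statement than the one in the paper.
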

\begin{proof}
Let $I$ be a 2-absorbing $J$-prime of $R$. Let $c \in (I:a \circ b)$ for some $a,b \in R$ with $a \circ b \nsubseteq I$. This means $a \circ b \circ c \subseteq I$. Since the hyperideal $I$ of $R$ is 2-absorbing $J$-prime and $a \circ b \nsubseteq I$, then $a \circ c \subseteq J(R)$ of $b \circ c \subseteq J(R)$ which means $c \in (J(R):a)$ or $c \in (J(R):b)$. This implies that $(I:a \circ b) \subseteq (J(R):a)$ or $(I:a \circ b) \subseteq (J(R):b)$.
\end{proof}
Recall that a hyperideal $I$ of $R$ is called a strong {\bf C}-hyperideal if for any $E \in \mathfrak{U}$, $E \cap I \neq \varnothing$, then $E \subseteq I$, where $\mathfrak{U}=\{\sum_{i=1}^n A_i \ : \ A_i \in {\bf C}, n \in \mathbb{N}\}$ and ${\bf C} = \{r_1 \circ r_2 \circ  . . .  \circ r_n \ : \ r_i \in R, n \in \mathbb{N}\}$.
\begin{theorem}  \label{67}
Let $I$ be a proper strong {\bf C}-hyperideal of $R$. Then the following
 are equivalent:
 \begin{itemize}
\item[\rm(i)]~$I$ is a 2-absorbing $J$-prime hyperideal of $R$.
\item[\rm(ii)]~ If $a \circ b \circ H \subseteq I$ for some $a,b \in R$ and some hyperideal $H$ of $R$, then $a \circ b \subseteq I$ or $a \circ H \subseteq J(R)$ or $b \circ H \subseteq J(R)$.
\item[\rm(iii)]~If $a \circ H \circ T \subseteq I$ for some $a \in R$ and some hyperideals $H, T$ of $R$, then $a \circ H \subseteq I$ or $a \circ T \subseteq J(R)$ or $H \circ T \subseteq J(R)$.
\item[\rm(iv)]~If $K \circ H \circ T \subseteq I$ for  some hyperideals $K, H, T$ of $R$, then $K \circ H \subseteq I$ or $K \circ T \subseteq J(R)$ or $H \circ T \subseteq J(R)$.
\end{itemize}
\end{theorem}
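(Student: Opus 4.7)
The plan is to prove the equivalence as the cycle $(i) \Rightarrow (ii) \Rightarrow (iii) \Rightarrow (iv) \Rightarrow (i)$. The last link is the easiest: given $a \circ b \circ c \subseteq I$ for elements $a,b,c \in R$, one applies (iv) to the principal hyperideals $K = \langle a \rangle$, $H = \langle b \rangle$, $T = \langle c \rangle$, noting that by Proposition 2.15 of \cite{ref1} one has $\langle a \rangle \circ \langle b \rangle \circ \langle c \rangle \subseteq \langle a \circ b \circ c \rangle \subseteq I$, and then reads off the element-level conclusion.

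For the three forward implications, each of which promotes one more argument from an element of $R$ to a full hyperideal, I would run the same two-witness contradiction argument. Take $(i) \Rightarrow (ii)$ as prototype: assume $a \circ b \circ H \subseteq I$ with $a \circ b \nsubseteq I$, and suppose toward contradiction that neither $a \circ H \subseteq J(R)$ nor $b \circ H \subseteq J(R)$, so one can pick witnesses $h_1, h_2 \in H$ with $a \circ h_1 \nsubseteq J(R)$ and $b \circ h_2 \nsubseteq J(R)$. Applying (i) to the triples $(a,b,h_1)$ and $(a,b,h_2)$ forces $b \circ h_1 \subseteq J(R)$ and $a \circ h_2 \subseteq J(R)$. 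The key step is then to apply (i) to $(a,b,h_1+h_2)$, which is legal since $h_1 + h_2 \in H$, landing in one of the subcases $a \circ (h_1+h_2) \subseteq J(R)$ or $b \circ (h_1+h_2) \subseteq J(R)$. The implications $(ii) \Rightarrow (iii)$ and $(iii) \Rightarrow (iv)$ follow the same template: split the "outer" hyperideal into the elements satisfying the $I$-conclusion and those satisfying the $J(R)$-conclusion, use the hypothesized failure of the two $J(R)$-conclusions to produce a witness $k_0$ lying in the $I$-part, and then, for any hypothetical $k$ violating the $I$-conclusion, apply the previous statement of the theorem to $k + k_0$ to reach a contradiction.

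The main obstacle in every step is that a multiplicative hyperring only satisfies the one-sided distributivity $a \circ (x+y) \subseteq a \circ x + a \circ y$, so a hypothesis such as $a \circ (h_1 + h_2) \subseteq J(R)$ cannot be restricted directly to $a \circ h_1$. I would resolve this uniformly in each subcase as follows: pick an arbitrary $z \in a \circ (h_1 + h_2)$, write $z = u + v$ with $u \in a \circ h_1$ and $v \in a \circ h_2$ by the inclusive distributivity, and use the fact that the second piece already lies in $J(R)$ to conclude $u = z - v \in J(R)$. Since $a \circ h_1 \in {\bf C}$ and $J(R)$ is a {\bf C}-hyperideal under the paper's blanket assumption, the nonempty intersection $a \circ h_1 \cap J(R)$ forces $a \circ h_1 \subseteq J(R)$, contradicting the choice of $h_1$. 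The symmetric subcase and the "push to $I$" subcases arising in $(ii) \Rightarrow (iii)$ and $(iii) \Rightarrow (iv)$ are handled identically with $I$ in place of $J(R)$, using the {\bf C}-hyperideal property of $I$ (which is actually where the paper's strong {\bf C}-hyperideal hypothesis can be invoked, though ordinary {\bf C}-hyperideality is what the argument really needs).
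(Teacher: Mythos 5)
Your proposal is correct and follows essentially the same route as the paper: the same cycle of implications, the same reduction to principal hyperideals for $(iv)\Rightarrow(i)$, and the same sum-of-two-witnesses trick combined with the {\bf C}-hyperideal property (via the subtraction argument) to upgrade elementwise conclusions to hyperideal-level ones. The only cosmetic differences are that you prove $(i)\Rightarrow(ii)$ directly instead of routing through the paper's Theorem \ref{66}, and that you make explicit the handling of the inclusion-only distributivity, which the paper's write-up leaves largely implicit.
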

\begin{proof}
$(i)\Longrightarrow (ii)$ Let $a \circ b \circ H \subseteq I$ for some $a,b \in R$ and some hyperideal $H$ of $R$ such that $a \circ b \nsubseteq I$. Hence, we get $H\subseteq (J(R):a)$ or $H \subseteq (J(R):b)$, by Theorem \ref{66}. This implies that $a \circ H \subseteq J(R)$ or $b \circ H \subseteq J(R)$.

$(ii)\Longrightarrow (iii)$ Let $a \circ H \circ T \subseteq I$ for some $a \in R$ and some hyperideals $H, T$ of $R$ such that $a \circ H \nsubseteq I$ and $a \circ T \nsubseteq J(R)$. Therefore, we get $a \circ h_1 \nsubseteq I$  and $a \circ t_1 \nsubseteq J(R)$ for some $h_1 \in H$ and $t_1 \in T$. Since $a \circ h_1 \circ T \subseteq I$, then $h_1 \circ T \subseteq J(R)$, by (i). Let $h_2 \in H$. Suppose that $h_2 \circ T \nsubseteq J(R)$. By (i), we get $a \circ h_2 \subseteq I$ as $a \circ h_2 \circ T  \subseteq I$. Hence $a \circ (h_1+h_2) \nsubseteq I$. Again by (i), we obtain $(h_1+h_2) \circ T \subseteq J(R)$ as $a \circ (h_1+h_2) \circ T \subseteq I$. As $h_1 \circ T \subseteq J(R)$, $h_2 \circ T \subseteq J(R)$ which means $H \circ T \subseteq J(R)$.

$(iii)\Longrightarrow (iv)$ Let $K \circ H \circ T \subseteq I$ for  some hyperideals $K, H, T$ of $R$ such that $K \circ H \nsubseteq I$ and $H \circ T \nsubseteq J(R)$. Then we have $k \circ H \nsubseteq I$ for some $k \in  K$. It is clear that $k \circ H \circ T \subseteq I$. Hence we have $k \circ T \subseteq J(R)$, by (iii). Assume that $x \in K$. By (iii),  we conclude that $x \circ H \subseteq I$ or $x \circ T \subseteq J(R)$ as $x \circ H \circ T \subseteq I$. If $x \circ H \subseteq I$, then $(k+x) \circ H \subseteq (k \circ H)+(x \circ H) \nsubseteq I$. Clearly, $(k+x) \circ H \circ T \subseteq I$. Then $(k+x) \circ T \subseteq J(R)$. Since $I$  is a strong {\bf C}-hyperideal of $R$, then $(k \circ T)+(x \circ T) \subseteq J(R)$. Hence, $x \circ T \subseteq J(R)$ as  $k \circ T \subseteq J(R)$. Now, let $x \circ H \nsubseteq I$. It is obvious that $x \circ H \circ T \subseteq I$. Therefore, $x \circ T \subseteq J(R)$, by (iii). Consequently, $K \circ T \subseteq J(R)$.

$(iv)\Longrightarrow (i)$ Let $x \circ y \circ z \subseteq I$ for some $x,y,z \in R$ such that $x \circ z \nsubseteq J(R)$ and $y \circ z \nsubseteq J(R)$. We consider $K=\langle x \rangle$, $H=\langle y \rangle$ and $T=\langle z \rangle$. Thus, we conclude that $K \circ T=\langle x \rangle \circ \langle z \rangle \subseteq \langle x \circ z \rangle \nsubseteq J(R)$ and $H \circ T=\langle y \rangle \circ \langle z \rangle \subseteq \langle y \circ z \rangle \nsubseteq J(R)$. Since $K \circ H \circ T \subseteq I$, by (iv), we get $K \circ H  \subseteq I$ which means $x \circ y \subseteq I$ and that completes the proof.
\end{proof}

\begin{theorem}
If every proper hyperideal of $R$ is 2-absorbing $J$-prime, then $R$ is a local multiplicative hyperring.
\end{theorem}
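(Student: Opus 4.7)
The plan is to show that every maximal hyperideal of $R$ must coincide with $J(R)$, which immediately forces $R$ to have a unique maximal hyperideal and hence to be local. The key point is that the 2-absorbing $J$-prime property of a maximal hyperideal $M$, applied to the hyperproduct $1 \circ 1 \circ x$ for an arbitrary $x \in M$, is strong enough to deposit $x$ inside $J(R)$; this is essentially the same move that underlies the proof of Theorem \ref{62}.

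More concretely, I would fix an arbitrary maximal hyperideal $M$ of $R$ and an arbitrary element $x \in M$. Since $1 \in 1 \circ 1$ we have $x \in 1 \circ x \subseteq 1 \circ 1 \circ x$, and because $M$ is a $\mathbf{C}$-hyperideal (the standing convention adopted in the Preliminaries), the nonempty intersection $(1 \circ 1 \circ x) \cap M$ forces $1 \circ 1 \circ x \subseteq M$. Since $M$ is 2-absorbing $J$-prime by hypothesis, one of the following must hold: $1 \circ 1 \subseteq M$, or $1 \circ x \subseteq J(R)$. The first case collapses immediately, since $1 \in 1 \circ 1 \subseteq M$ contradicts the properness of $M$; the remaining case yields $x \in 1 \circ x \subseteq J(R)$. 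As $x$ was arbitrary in $M$, we conclude $M \subseteq J(R)$.

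The reverse inclusion $J(R) \subseteq M$ is automatic from the definition of $J(R)$ as the intersection of all maximal hyperideals of $R$, so $M = J(R)$. Because this equality holds for every maximal hyperideal $M$ of $R$, the ring $R$ admits a unique maximal hyperideal, namely $J(R)$, and is therefore a local multiplicative hyperring.

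I do not foresee any genuine obstruction; the argument is short and self-contained. The only mildly delicate point is the first step, where one must invoke the $\mathbf{C}$-hyperideal convention to upgrade the element-wise witness $x \in (1 \circ 1 \circ x) \cap M$ to the set-wise containment $1 \circ 1 \circ x \subseteq M$ needed to trigger the defining clause of ``2-absorbing $J$-prime''.
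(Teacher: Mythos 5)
Your proof is correct and follows essentially the same route as the paper: both reduce to showing $M \subseteq J(R)$ for every maximal hyperideal $M$ via the $1 \circ 1 \circ x$ trick, the only difference being that the paper applies the 2-absorbing $J$-prime hypothesis to the principal hyperideal $\langle x \rangle$ and cites Theorem \ref{62}, whereas you inline that same argument applied directly to $M$. No gap; the appeal to the standing \textbf{C}-hyperideal convention to get $1 \circ 1 \circ x \subseteq M$ is exactly the right justification.
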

\begin{proof}
Let every proper hyperideal of $R$ be 2-absorbing $J$-prime. Suppose that $M$ is a maximal hyperideal of $R$. We show that $M  \subseteq J(R)$. Let $a \in M$ and $I=\langle a \rangle$. Since $I$ is a 2-absorbing $J$-prime of $R$, then $I \subseteq J(R)$ by Theorem \ref{62}. Therefore, $a \in J(R)$ which means $M \subseteq J(R)$. Thus $J(R) =M$ as $J(R) \subseteq M$. Consequently, $R$ is a local multiplicative hyperring.
\end{proof}
\begin{theorem} \label{entersection2}
Let $\{I_i\}_{i \in \Delta}$ be a  nonmepty set of 2-absorbing $J$-prime hyperideals of $R$. Then $\bigcap_{i \in \Delta}I_i$ is a 2-absorbing $J$-prime hyperideal of $R$.
\end{theorem}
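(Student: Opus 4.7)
The plan is to mimic the proof of Theorem \ref{entersection}, taking advantage of a key structural feature of the 2-absorbing $J$-prime condition: two of the three disjunctive conclusions (those asserting containment in $J(R)$) depend only on the triple $(x,y,z)$ and not on which $I_i$ we are testing. Only the conclusion $x\circ y \subseteq I_i$ depends on $i$. This asymmetry is what lets the proof go through.

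First I would check that $\bigcap_{i\in\Delta} I_i$ is proper. This is immediate: for any fixed $i_0\in\Delta$ we have $\bigcap_{i\in\Delta} I_i \subseteq I_{i_0} \subsetneq R$. Next, suppose $x\circ y\circ z \subseteq \bigcap_{i\in\Delta} I_i$ for some $x,y,z \in R$. I would split into cases. If $x\circ z \subseteq J(R)$, or if $y\circ z \subseteq J(R)$, the 2-absorbing $J$-prime condition for the intersection is already satisfied for the triple $(x,y,z)$, and we are done.

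Otherwise, assume both $x\circ z \nsubseteq J(R)$ and $y\circ z \nsubseteq J(R)$. Fix an arbitrary $i\in\Delta$. Since $x\circ y\circ z \subseteq I_i$ and $I_i$ is a 2-absorbing $J$-prime hyperideal of $R$, the definition forces one of $x\circ y\subseteq I_i$, $x\circ z\subseteq J(R)$, or $y\circ z\subseteq J(R)$ to hold; the latter two are ruled out by our standing assumption, so we conclude $x\circ y \subseteq I_i$. Since $i\in\Delta$ was arbitrary, $x\circ y \subseteq \bigcap_{i\in\Delta} I_i$, which completes the verification that $\bigcap_{i\in\Delta} I_i$ is 2-absorbing $J$-prime.

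There is no real obstacle here; the only subtlety worth flagging is the one mentioned above, namely that the argument relies essentially on the fact that $J(R)$ is a single fixed hyperideal of $R$ (not varying with $i$), so that ruling out the two $J(R)$-alternatives once rules them out uniformly over all $i\in\Delta$. If the definition instead had used $J(I_i)$ in the place of $J(R)$, this simple intersection argument would not work directly, and one would need an additional hypothesis.
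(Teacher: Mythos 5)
Your proof is correct and is essentially identical to the paper's: both assume $x\circ z \nsubseteq J(R)$ and $y\circ z \nsubseteq J(R)$ and then conclude $x\circ y \subseteq I_i$ for every $i$, hence $x\circ y$ lies in the intersection. Your closing remark about why the fixed hyperideal $J(R)$ (rather than an $i$-dependent one) makes the argument work is a nice observation, but the underlying argument is the same.
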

\begin{proof}
Let $x \circ y \circ z\subseteq \bigcap_{i \in \Delta}I_i$ for some $x,y,z \in R$ such that $x \circ z \nsubseteq  J(R)$ and $y \circ z \nsubseteq  J(R)$. This implies that  $x \circ y \circ z \subseteq I_i$ for every $i \in \Delta$. Since $I_i$ is a 2-absorbing $J$-prime hyperideal of $R$ for every $i \in \Delta$, we get the result that $x \circ y \subseteq  I_i$  and so $x \circ y \subseteq \bigcap_{i \in \Delta}I_i$. Thus $\bigcap_{i \in \Delta}I_i$ is a 2-absorbing $J$-prime hyperideal of $R$.
\end{proof}
\begin{definition}
A proper hyperideal $I$ of $R$ is called 2-absorbing $J$-primary if whenever elements $x,y,z \in R$ and $x \circ y \circ z \subseteq I$, then $x \circ y \subseteq I$ or $x \circ z \in J(I)$ or $y \circ z \in J(I)$.
\end{definition}
\begin{theorem}
Let $I$ be a hyperideal of $R$ . Then $I$ is a 2 -absorbing $J$-prime hyperideal of $R$ if and only if $I$ is a 2-absorbing $J$-primary hyperideal of $R$ with $J(I)=J(R)$.
\end{theorem}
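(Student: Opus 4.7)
The plan is to reduce both directions of the equivalence to a single structural observation: the 2-absorbing $J$-prime condition and the 2-absorbing $J$-primary condition are syntactically identical except that the former refers to $J(R)$ and the latter to $J(I)$. Once $J(I)=J(R)$ is established, each implication becomes a direct rewriting with no extra content.

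For the forward direction, I would begin by extracting from Theorem \ref{62} the inclusion $I\subseteq J(R)$. The argument given there in fact establishes this sharper statement: if some $x\in I$ were outside $J(R)$, then applying the 2-absorbing $J$-prime property to $1\circ 1\circ x\subseteq I$ would force either $1\circ 1\subseteq I$ (impossible since $I$ is proper) or $x\in 1\circ x\subseteq J(R)$ (contradiction). With $I\subseteq J(R)$ in hand, every maximal hyperideal of $R$ contains $I$, so the family of maximal hyperideals containing $I$ coincides with the family of all maximal hyperideals, giving $J(I)=J(R)$. Then for any $x,y,z\in R$ with $x\circ y\circ z\subseteq I$, the 2-absorbing $J$-prime hypothesis supplies $x\circ y\subseteq I$, or $x\circ z\subseteq J(R)=J(I)$, or $y\circ z\subseteq J(R)=J(I)$, which is exactly the 2-absorbing $J$-primary condition.

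For the backward direction, the equality $J(I)=J(R)$ is supplied by hypothesis, so the same rewriting runs in reverse: from $x\circ y\circ z\subseteq I$ the 2-absorbing $J$-primary property gives $x\circ y\subseteq I$, or $x\circ z\subseteq J(I)=J(R)$, or $y\circ z\subseteq J(I)=J(R)$, matching the 2-absorbing $J$-prime definition verbatim.

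The only delicate point is the careful use of Theorem \ref{62} in the forward direction to obtain $I\subseteq J(R)$ rather than only the tautological $I\subseteq J(I)$; it is precisely this inclusion that forces the two Jacobson radicals to coincide and allows the two conditions to be identified. Once that identification is secured, both implications are immediate substitutions, and I expect no further obstacles.
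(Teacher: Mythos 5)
Your proposal is correct and follows essentially the same route as the paper: both directions are handled by the substitution $J(I)=J(R)$ applied to the defining conditions, with the forward direction obtaining $I\subseteq J(R)$ from the argument of Theorem \ref{62} (applied to $1\circ 1\circ x\subseteq I$) and deducing $J(I)=J(R)$ from it. Your explicit remark that Theorem \ref{62} really establishes $I\subseteq J(R)$, not merely the stated $I\subseteq J(I)$, is exactly the reading the paper itself relies on in this proof.
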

\begin{proof}
$\Longrightarrow$ Let $I$ be a 2 -absorbing $J$-prime hyperideal of $R$. Suppose that $x,y,z \subseteq I$ for some $x,y,z \in R$. This implies that $x \circ y \subseteq I$ or $x \circ z \subseteq J(R)$ or $y \circ z \subseteq J(R)$ as $I$ is  a 2 -absorbing $J$-prime hyperideal of $R$. Since $J(R) \subseteq J(I)$, we have $x \circ y \subseteq I$ or $x \circ z \subseteq J(I)$ or $y \circ z \subseteq J(I)$. Thus $I$ is a 2-absorbing $J$-primary hyperideal of $R$. It is clear that $J(R) \subseteq J(I)$.  For the reverse inclusion,
we have $I \subseteq J(R)$ by Theorem \ref{62}. Thus $J(I) \subseteq J(R)$ and so   $J(I) =J(R)$. 

$\Longleftarrow$ Let $I$ be a 2-absorbing $J$-primary hyperideal of $R$ such that $J(I)=J(R)$. Suppose that $x,y,z \subseteq I$ for some $x,y,z \in R$. This implies that $x \circ y \subseteq I$ or $x \circ z \subseteq J(I)$ or $y \circ z \subseteq J(I)$ as $I$ is a 2-absorbing $J$-primary hyperideal of $R$. Since $J(I) =J(R)$, we get $x \circ y \subseteq I$ or $x \circ z \subseteq J(R)$ or $y \circ z \subseteq J(R)$. This means $I$ is a 2 -absorbing $J$-prime hyperideal of $R$.
\end{proof}
\begin{theorem} 
Let $(R_1,+_1,\circ_1)$ and $(R_2,+_2,\circ_2)$ be two multiplicative hyperrings and let $I_1$ and $I_2$ be hyperideals of $R_1$ and $R_2$, respectively. Then $I_1 \times I_2$ is a 2-absorbing $J$-prime hyperideal of $R_1 \times R_2$ if and only if $I_1$ is a $J$-prime hyperideal of $R_1$ and $I_2$ is a $J$-prime hyperideal of $R_2$.
\end{theorem}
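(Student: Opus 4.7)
The argument splits into two directions, both resting on the identification $J(R_1 \times R_2) = J(R_1) \times J(R_2)$, which is a consequence of the fact that the maximal hyperideals of $R_1 \times R_2$ are precisely the sets $M \times R_2$ and $R_1 \times M'$ for $M, M'$ maximal in the respective factors.

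For the direction $(\Leftarrow)$, assume each $I_i$ is $J$-prime in $R_i$; by Theorem \ref{34} each $I_i \subseteq J(R_i)$, so in particular each $I_i$ and hence $I_1 \times I_2$ is proper. Suppose $(x_1,x_2) \circ (y_1,y_2) \circ (z_1,z_2) \subseteq I_1 \times I_2$, so $x_i \circ y_i \circ z_i \subseteq I_i$ for each $i$. Define the three predicates $P_1^{(i)}: x_i y_i \subseteq I_i$, $P_2^{(i)}: x_i z_i \subseteq J(R_i)$, $P_3^{(i)}: y_i z_i \subseteq J(R_i)$. The heart of the argument is the claim that at least two of $P_1^{(i)}, P_2^{(i)}, P_3^{(i)}$ hold for each $i$, proved by case analysis on which of $x_i, y_i, z_i$ lie in $J(R_i)$: any element inside $J(R_i)$ forces the two pair-products containing it into $J(R_i)$ by absorption, while any element outside $J(R_i)$ can be cancelled via Theorem \ref{36}(ii) to put the opposite pair inside $I_i$; each of the eight cases supplies at least two of the $P_k^{(i)}$. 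The sets $S_i = \{k : P_k^{(i)} \text{ holds}\}$ are then subsets of $\{1,2,3\}$ of size $\geq 2$, so by pigeonhole they share a common index $k$. That common $k$ translates directly into one of the three required disjuncts for $I_1 \times I_2$ to be 2-absorbing $J$-prime: $(x_1,x_2)(y_1,y_2) \subseteq I_1 \times I_2$ if $k=1$, and $(x_1,x_2)(z_1,z_2)$ or $(y_1,y_2)(z_1,z_2) \subseteq J(R_1 \times R_2)$ if $k=2$ or $3$.

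For the direction $(\Rightarrow)$, assume $I_1 \times I_2$ is 2-absorbing $J$-prime. It is proper (hence each $I_i$ is proper), and by Theorem \ref{62} it sits inside $J(R_1) \times J(R_2)$, giving $I_i \subseteq J(R_i)$. To show $I_1$ is $J$-prime, take $x_1 \circ_1 y_1 \subseteq I_1$ and apply the 2-absorbing $J$-prime axiom to the ordered triple $(y_1,1),(1,0),(x_1,1)$. Its product lies in $I_1 \times I_2$: the first coordinates multiply into $x_1 y_1 \circ 1 \subseteq I_1$ by absorption, and the second coordinates into $1 \circ 0 \circ 1 \subseteq I_2$ since $0 \in I_2$. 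In the disjunction, the pair $(y_1,1) \circ (x_1,1)$, which is the "$xz$"-pair in this ordering, has second coordinate containing $1 \notin J(R_2)$, so it cannot lie inside $J(R_1) \times J(R_2)$. One of the other two pairs must therefore satisfy its conclusion: either $(y_1,1) \circ (1,0) \subseteq I_1 \times I_2$, which forces $y_1 \in y_1 \circ 1 \subseteq I_1$, or $(1,0) \circ (x_1,1) \subseteq J(R_1) \times J(R_2)$, which forces $x_1 \in 1 \circ x_1 \subseteq J(R_1)$. The symmetric triple $(1,y_2),(0,1),(1,x_2)$ yields the $J$-primeness of $I_2$.

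The main obstacle is the "at least two" strengthening in the backward direction: Theorem \ref{61} alone only yields one of $P_1^{(i)}, P_2^{(i)}, P_3^{(i)}$, which would allow $S_1$ and $S_2$ to be disjoint singletons and defeat the pigeonhole step, so one must stitch together all three forms of Theorem \ref{36}(ii) with the hyperideal closure of $J(R_i)$ to force $|S_i| \geq 2$. A subtler secondary point in the forward direction is the ordering of the triple: the natural symmetric choices such as $(x_1,1),(y_1,1),(1,0)$ only produce the weaker conclusion $x_1 \in J(R_1)$ or $y_1 \in J(R_1)$, and one must place $(1,0)$ in the middle slot so that the $xz$-pair becomes impossible in $J(R_1 \times R_2)$ and the $xy$-pair carries the sharper conclusion $y_1 \in I_1$.
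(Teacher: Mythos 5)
Your proof is correct. The forward direction is essentially the paper's own argument: both apply the $2$-absorbing $J$-prime condition to a triple with $(1,0)$ placed so that the two pairs required to land in $J(R_1\times R_2)$ are blocked by a coordinate containing $1$ or an element outside $J(R_1)$, and both rely on the identification $J(R_1\times R_2)=J(R_1)\times J(R_2)$ (which the paper uses implicitly and you state explicitly). The backward direction, however, is a genuinely different decomposition. The paper argues directly on the product: it assumes $(a_2,b_2)\circ(a_3,b_3)\nsubseteq J(R_1\times R_2)$, then chases elements coordinate by coordinate, treating the two coordinates asymmetrically (it in fact only works out the subcase $a_2\circ_1 a_3\nsubseteq J(R_1)$ of that assumption, so its case coverage is less than transparent). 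You instead prove a clean componentwise lemma --- for a $J$-prime $I_i$ and $x_i\circ y_i\circ z_i\subseteq I_i$, at least \emph{two} of the three target conclusions hold, using absorption into the hyperideal $J(R_i)$ for elements inside it, the cancellation $I_i=(I_i:r)$ of Theorem \ref{36} for elements outside it, and $I_i\subseteq J(R_i)$ from Theorem \ref{34} to convert a containment in $I_i$ into one in $J(R_i)$ where needed --- and then finish by pigeonhole on the two index sets $S_1,S_2\subseteq\{1,2,3\}$. Your version costs an extra lemma and an eight-way (really two-way, splitting on whether $z_i\in J(R_i)$) case check, but it buys a uniform, symmetric argument that visibly covers every configuration, whereas the paper's shorter argument leaves some cases to the reader; both are valid modulo the same unproved structural facts about hyperideals of $R_1\times R_2$.
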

\begin{proof}
$\Longrightarrow$ Let $I_1 \times I_2$ is a 2-absorbing $J$-prime hyperideal of $R_1 \times R_2$. Suppose that  $a \circ_1 b \subseteq I_1$ such that $b \notin J(R_1)$. Therefore $(a,1) \circ (1,0) \circ (b,1)=\{(x,y) \ \vert \ x \in a \circ_1 1 \circ_1 b , y \in 1 \circ_2 0 \circ_2 1\} \subseteq I_1 \times I_2$, $(a,1) \circ (b,1)=\{(x^\prime,y^\prime) \ \vert \ x^\prime \in a \circ_1 b, y^\prime \in 1 \circ_2 1\} \nsubseteq J(R_1 \times R_2)$ and  $(b,0) \in (1,0) \circ (b,1) \nsubseteq J(R_1 \times R_2)$. Hence we get $(a,0) \in (a,1) \circ (1,0) \subseteq I_1 \times I_2$ and so $a \in I_1$. Consequently, $I_1$ is a $J$-prime hyperideal of $R_1$. By a similar argument, we can prove that $I_2$ is a $J$-prime hyperideal of $R_2$.

$\Longleftarrow$ Let $I_1$ be a $J$-prime hyperideal of $R_1$ and $I_2$ is a $J$-prime hyperideal of $R_2$. Suppose that $(a_1,b_1) \circ (a_2,b_2) \circ (a_3,b_3) \subseteq I_1 \times I_2$ such that $(a_2,b_2) \circ (a_3,b_3) \nsubseteq J(R_1 \times R_2)$. Take  $a \in a_2 \circ_1 a_3$. Then $a_1 \circ_1 a \subseteq a_1 \circ_1 a_2 \circ_1 a_3 \subseteq I_1$. We assume that  $a_2 \circ_1 a_3 \nsubseteq J(R_1)$ then $a \notin J(R_1)$ as $J(R_1)$ is a {\bf C}-hyperideal.  Since  $I_1$ is a $J$-prime hyperideal of $R_1$, $a_1 \in I_1$ and so $a_1 \circ_1 a_2 \subseteq I_1$. On the other hand, if $b_1 \circ_2 b_3 \subseteq J(R_2)$, then $(a_1,b_1) \circ (a_3,b_3) \subseteq I_1 \times J(R_2) \subseteq J(R_1 \times R_2)$, as needed. Then we may assume $b_1 \circ b_3 \nsubseteq J(R_2)$. Take $b \in b_1 \circ_2 b_3$. Then $b_2 \circ_2 b \subseteq I_2$. Since $I_2$ is a $J$-prime hyperideal of $R_2$ and $b \notin J(R_2)$, we have $b_2 \in I_2$ and so $b_1 \circ_2 b_2 \subseteq I_2$. This $(a_1,b_1) \circ (a_2,b_2) \subseteq I_1 \times I_2$. This implies that $I_1 \times I_2$ is a 2-absorbing $J$-prime hyperideal of $R_1 \times R_2$.
\end{proof}
\begin{theorem}
Let $(R_1,+_1,\circ_1)$, $(R_2,+_2,\circ_2)$ and $(R_3,+_3,\circ_3)$ be three multiplicative hyperrings with non zero identity. Then, $R_1 \times R_2 \times R_3$ has no 2-absorbing $J$-prime hyperideals.
\end{theorem}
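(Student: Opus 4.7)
The plan is to extend the two-factor argument from the preceding theorem by finding three carefully chosen coordinate elements whose triple product falls in any proper hyperideal, yet whose pairwise products cannot simultaneously satisfy the 2-absorbing $J$-prime condition. Suppose for contradiction that $I$ is a 2-absorbing $J$-prime hyperideal of $R_1 \times R_2 \times R_3$; the goal is to show $(1,1,1)\in I$ and thereby contradict properness.

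First I would confirm that each of $(1,0,0)$, $(0,1,0)$, $(0,0,1)$ lies outside $J(R_1 \times R_2 \times R_3)$. Since each $R_i$ has non-zero identity, Zorn's lemma provides a maximal hyperideal $M_i\subseteq R_i$; then $M_1\times R_2\times R_3$, $R_1\times M_2\times R_3$, and $R_1\times R_2\times M_3$ are maximal hyperideals of the product, and each of the three coordinate elements is missed by one of these three hyperideals. So by the definition of $J$ as the intersection of maximal hyperideals, none of the coordinate elements belongs to $J(R_1 \times R_2 \times R_3)$.

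Next, I would set $x=(1,1,0)$, $y=(1,0,1)$, $z=(0,1,1)$. Since every coordinate of $x\circ y\circ z$ arises from a product with a zero entry, $(0,0,0)\in x\circ y\circ z$; then $x\circ y\circ z\cap I\neq\varnothing$ and the $\mathbf{C}$-hyperideal hypothesis forces $x\circ y\circ z\subseteq I$. A direct componentwise expansion shows $(0,1,0)\in x\circ z$ and $(0,0,1)\in y\circ z$, so by the previous step neither $x\circ z$ nor $y\circ z$ sits inside $J(R_1\times R_2\times R_3)$. The 2-absorbing $J$-prime property therefore forces $x\circ y\subseteq I$; since $1\in 1\circ 1$ gives $(1,0,0)\in x\circ y$, we conclude $(1,0,0)\in I$. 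Cycling the roles of the coordinates --- for instance with the triples $((1,1,0),(0,1,1),(1,0,1))$ and $((1,0,1),(0,1,1),(1,1,0))$ --- the identical reasoning yields $(0,1,0)\in I$ and $(0,0,1)\in I$. Summing, $(1,1,1)\in I$, and $I=R_1\times R_2\times R_3$, contradicting properness.

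The main obstacle is really just the Jacobson-radical step: once the three coordinate elements are shown to avoid $J(R_1\times R_2\times R_3)$, the rest is routine componentwise bookkeeping that parallels the preceding two-factor theorem almost verbatim.
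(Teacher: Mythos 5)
Your proof is correct and takes essentially the same route as the paper: the same three elements $(1,1,0)$, $(0,1,1)$, $(1,0,1)$, the same use of the {\bf C}-hyperideal hypothesis to place the triple product inside $I$, the same application of the 2-absorbing $J$-prime condition to each cyclic arrangement, and the same final step of summing $(1,0,0)+(0,1,0)+(0,0,1)=(1,1,1)$ to contradict properness. The only difference is that you explicitly justify, via the maximal hyperideals $M_1\times R_2\times R_3$, etc., why the pairwise products avoid $J(R_1\times R_2\times R_3)$ --- a point the paper merely asserts --- which is a welcome addition rather than a divergence.
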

\begin{proof}
Let $I_1 \times I_2 \times I_3$ is a 2-absorbing $J$-prime hyperideal of $R_1 \times R_2 \times R_2$ for some hyperideals $I_1, I_2$ and $I_3$  of $R_1$, $R_2$ and $R_3$, respectively. Since $(0,0,0) \in (1,1,0) \circ (0,1,1)  \circ (1,0,1) \cap I_1 \times I_2 \times I_3$, then $(1,1,0) \circ (0,1,1)  \circ (1,0,1) \subseteq I_1 \times I_2 \times I_3$. Since $(1,1,0)  \circ (0,1,1) \nsubseteq J(R_1 \times R_2 \times R_3)$ and $(1,1,0)  \circ (1,0,1) \nsubseteq J(R_1 \times R_2 \times R_3)$, then $(0,0,1) \in (0,0,1) \circ (1,0,1) \subseteq I_1 \times I_2 \times I_3$. Moreover, we can get $(0,1,0), (1,0,0) \in I_1 \times I_2 \times I_3$. Thus $(1,1,1)=(1,0,0)+(0,1,0)+(0,0,1) \in I_1 \times I_2 \times I_3$ which is a contradiction.
\end{proof}

\end{document}